\begin{document}
\newcommand{\J}{\mathcal{J}} 
\newcommand{\cL}{\mathcal{L}}
\newcommand{\I}{\mathcal{I}}
\newcommand{\R}{\mathcal{R}}
\newcommand{\A}{\mathcal{A}}
\newcommand{\F}{\mathcal{F}}
\newcommand{\B}{\mathcal{B}}
\newcommand{\T}{\mathcal{T}}
\newcommand{\C}{\mathcal{C}}
\newcommand{\D}{\mathcal{D}}
\newcommand{\Q}{\mathcal{Q}}
\newcommand{\cS}{\mathcal{S}}
\newcommand{\FF}{\mathbb{F}}
\newcommand{\HH}{\mathbb{H}}
\newcommand{\CC}{\mathbb{C}}
\newcommand{\SSS}{\mathbb{S}}
\newcommand{\TS}{\widetilde{\SSS}}
\newcommand{\RR}{\mathbb{R}}
\newcommand{\OO}{\mathbb{O}}
\newcommand{\TO}{\widetilde{\OO}}
\newcommand{\ZZ}{\mathbb{Z}}
\newcommand{\AAA}{\mathbb{A}}
\newcommand{\ov}{\overline}
\newcommand{\wh}{\widehat}
\newcommand{\e}{\epsilon}
\newtheorem{theorem}{Theorem}[section]
\newtheorem{proposition}[theorem]{Proposition}
\newtheorem{lemma}[theorem]{Lemma}
\newtheorem{corollary}[theorem]{Corollary}

\theoremstyle{remark}
\newtheorem{remark}[theorem]{Remark}
\newtheorem{definition}[theorem]{Definition}
\newtheorem{example}[theorem]{Example}
\newtheorem{examples}[theorem]{Examples}
\newtheorem{question}[theorem]{Question}

\title[Locally complex  and  Cayley-Dickson algebras]
{On locally complex algebras and  low-dimensional Cayley-Dickson algebras }
\thanks{
2010 {\em Math. Subj. Class.} 17A35, 17A45, 17A70, 17D05.}
\thanks{Supported by the Slovenian Research Agency (program No. P1-0288).
}

\author{Matej Bre\v sar, Peter \v Semrl, \v Spela \v Spenko }
\address{Matej Bre\v sar, Faculty of Mathematics and Physics, University of Ljubljana, and Faculty of Natural Sciences and Mathematics, University of Maribor, Slovenia}
\address{Peter \v Semrl and \v Spela \v Spenko, Faculty of Mathematics and Physics, University of Ljubljana, Slovenia}
\email{matej.bresar@fmf.uni-lj.si}
\email{peter.semrl@fmf.uni-lj.si}
\email{spela.spenko@student.fmf.uni-lj.si }

\begin{abstract}The paper begins with 
 short proofs of classical theorems by Frobenius and (resp.) Zorn on  associative and (resp.) alternative real division algebras. These theorems characterize the first three (resp. four) Cayley-Dickson algebras.  
Then we introduce and study  the class of real unital nonassociative algebras in which the subalgebra generated by any  nonscalar element is isomorphic to $\CC$.  We call them {\em locally complex algebras}. In particular, we describe  all such algebras that have  dimension at most $4$. Our main motivation, however, for introducing locally complex algebras is that this concept makes it possible for us to extend  Frobenius' and Zorn's theorems in a way that it also involves the fifth Cayley-Dickson algebra, the sedenions. 
\end{abstract}

\maketitle

\section{Introduction}

The real number field $\RR$, the complex number field $\CC$, and the division agebra of real quaternions $\HH$ are classical examples of associative real division algebras. In 1878 Frobenius \cite{F} proved that in the finite dimensional context they are  also the only examples. Assuming alternativity instead of   associativity,  there is another example: $\OO$, the division algebra of octonions. It turns out that this is the only additional example. This result is attributed to Zorn \cite{Z}.

In Section 3 we  give short and self-contained proofs of these classical theorems by Frobenius and Zorn. Both proofs are based on the same idea. In fact, the proof of Zorn's theorem is a continuation of the proof of Frobenius' theorem. The proofs are constructive, it appears like $\HH$ and $\OO$ are met "unintentionally".  
 
Our proofs of Frobenius' and Zorn's theorems were discovered by accident, when examining the class of real unital algebras with the following  property: the  subalgebra generated by any element different from a scalar multiple of $1$ is isomorphic to $\CC$. These algebras, which we call {\em locally complex}, will be first considered in Section \ref{loccom}. In particular, we will classify 
all locally complex algebras of dimension at most 4.

Unlike real division algebras which exist  only in dimensions $1,2,4$, and $8$ \cite{BM, Ker}, locally complex algebras  exist in abundance in any dimension. However, among alternative (and hence also associative) finite dimensional real algebras, the concepts of division algebras and locally complex algebras  coincide. Frobenius' and Zorn's theorems can be therefore equivalently stated so that one replaces  "division" by "locally complex" in the formulation. This observation paves the way for continuing in the direction of these two theorems.

The algebras $\RR$, $\CC$, $\HH$, and $\OO$ are the first four (real) algebras formed in the Cayley-Dickson process. The next one is the $16$-dimensional algebra $\SSS$ of (real)  {\em sedenions}. It is the first algebra in this process  that is neither a division nor an alternative algebra. Although it is therefore somewhat less attractive than its famous predecessors, $\SSS$ has recently gained a considerable attention.  Over the last years it was considered in several papers by algebraists as well as by mathematical physicists \cite{Baez, Biss, BH, CM, ChanDj, Im, Kuwata, Moreno}. To the best of our knowledge, however, there are no results  that characterize $\SSS$ through its abstract algebraic properties. Moreover, one might get an impression when looking at some of these papers that such characterizations are not really expected (for example, see the introduction in \cite{Biss}).  One of the  goals of this paper is to show that actually they can be established. 

In Section \ref{Secsuper} we consider locally complex algebras that are simultaneously superalgebras with the property that all their homogeneous elements satisfy the alternativity conditions (see \eqref{alt} below). Our main result says that besides the obvious examples, i.e., $\RR$, $\CC$, $\HH$, $\OO$, and $\SSS$, there are exactly two more algebras having these properties, one in dimension $8$ and another one in dimension $16$. As  corollaries we  get three characterizations of $\SSS$: the first one is based on the existence of special elements satisfying a version of the alternativity condition, the second one is based on  the properties of zero divisors, and the third one is based on the structure of subalgebras.  

Let us remark that among the  papers listed above, the one by Calderon and Martin \cite{CM} is philosophically the closest one to our paper since it also considers  superalgebras. However, the two papers  do not seem to have any overlap.  On the other hand, in our final results on sedenions we were influenced by the papers  \cite{Biss, ChanDj, Moreno}.

\section{Preliminaries}\label{Prel}

The purpose of this section is to recall some definitions and elementary properties of the notions needed in subsequent sections. 

Let $A$ be a nonassociative algebra over a field. In this paper we will be actually interested only in the case where this field is $\RR$, although some parts, like the following definitions and comments, make sense in a more general setting. 
 Recall that $A$ is said to be a {\em division algebra} if for every nonzero $a\in A$,  $x\mapsto ax$ and $x\mapsto xa$ are bijective maps from $A$ onto $A$.  If $A$ is finite dimensional, then this is clearly equivalent to the condition that $A$ has no zero divisors. If $A$ is associative, then it is a division algebra if and only if it is unital (i.e., it has a unity $1$) and every nonzero element in $A$ has a multiplicative inverse. For general algebras this is not true.

The real {\em Cayley-Dickson} algebras $\AAA_n$, $n\ge 0$, are (nonassociative) real algebras with involution $\ast$, defined recursively as follows: $\AAA_0 =\RR$ with trivial involution $a^* =a$,
and $\AAA_n$ is the vector space $\AAA_{n-1}\times \AAA_{n-1}$ endowed with  multiplication  and involution  defined by
$$
(a,b)(c,d) = (ac - d^*b, da + bc^*),
$$
$$
(a,b)^* = (a^*,-b).
$$
It is easy to see that $\AAA_n$  is unital (in fact, the unity of $\AAA_n$ is $(1,0)$ where $1$ is the unity of $\AAA_{n-1})$, $x+x^*$ and $xx^*=x^*x$ are scalar multiplies of $1$ for every $x\in \AAA_n$,  and $\dim \AAA_n = 2^n$. Next, it
 is clear that $\AAA_1 = \CC$, and one easily notices that $\AAA_2 =\HH$, the {\em quaternions}.  The next algebra in this process is $\AAA_3 =\OO$, the {\em octonions}. For an excellent survey on octonions we refer the reader to \cite{Baez}. Let us record here just a few basic properties of $\OO$. First of all, $\OO$ is
  an $8$-dimensional division algebra. Denoting its basis by $\{1,e_1,\ldots,e_7\}$, the multiplication in $\OO$ is  determined by the following table:
\begin{small} 
\begin{center}
\begin{tabular}{|r|r|r|r|r|r|r|r|}
\hline
  & $e_1$ & $e_2$ & $e_3$ & $e_4$ & $e_5$ & $e_6$ & $e_7$\\\hline
  $e_1$ & $-1$ & $e_3$ & $-e_2$ & $e_5$ & $-e_4$ & $-e_7$ & $e_6$\\\hline
   $e_2$ & $-e_3$ & $-1$ & $e_1$ & $e_6$ & $e_7$ & $-e_4$ & $-e_5$\\\hline
     $e_3$ & $e_2$ & $-e_1$ & $-1$ & $e_7$ & $-e_6$ & $e_5$ & $-e_4$\\\hline
       $e_4$ & $-e_5$ & $-e_6$ & $-e_7$ & $-1$ & $e_1$ & $e_2$ & $e_3$\\\hline
         $e_5$ & $e_4$ & $-e_7$ & $e_6$ & $-e_1$ & $-1$ & $-e_3$ & $e_2$\\\hline
           $e_6$ & $e_7$ & $e_4$ & $-e_5$ & $-e_2$ & $e_3$ & $-1$ & $-e_1$\\\hline
             $ e_7$  & $-e_6$ & $e_5$ & $e_4$ & $-e_3$ & $-e_2$ & $e_1$ & $-1$\\\hline
\end{tabular}
\end{center}
\end{small}
Note that the linear span of $1,e_1,e_2,e_3$ is a subalgebra of $\OO$ isomorphic to $\HH$.

It is well known that $\OO$ is a division algebra which is not associative. However, it is "almost" associative - namely, it is  alternative. Recall that an algebra $A$ is said to be {\em alternative} if 
\begin{equation}\label{alt}
x^2 y = x (xy)\quad\mbox{and}\quad yx^2 = (yx)x
\end{equation}
holds for all $x,y\in A$. Incidentally, Artin's theorem says that this is equivalent to the condition that any two elements generate an  associative subalgebra \cite[p. 36]{ZSSS}. We shall need the identities from \eqref{alt} in their linearized forms:
\begin{equation} \label{eq2}
(xz+zx)y=x(zy)+z(xy),\,\, y(xz+zx)=(yx)z+(yz)x.
\end{equation}
Let us also record the so-called middle Moufang identity which, as one easily checks (see, e.g., \cite[p. 35]{ZSSS}),  holds in  every alternative algebra:
\begin{equation} \label{eq3}
(xy)(zx) = x(yz)x.
\end{equation}
With regard to the right-hand side of \eqref{eq3} it should be pointed out that alternative algebras are flexible, i.e., $x(yx) = (xy)x$ holds (after all, this follows from Artin's theorem), and therefore there is a convention to write $xyx$ instead of $(xy)x$  or $x(yx)$. 

The next algebra obtained by the Cayley-Dickson process is the $16$-dimensional algebra $\AAA_4=\SSS$, the {\em sedenions}. Let $\{1,e_1,\ldots,e_{15}\}$ be a basis of $\SSS$. This is the multiplication table for $\SSS$:

\begin{center}
\begin{tiny}
\begin{tabular}{|r|r|r|r|r|r|r|r|r|r|r|r|r|r|r|r|r|}
\hline
& $e_1$ & $e_2$ & $e_3$ & $e_4$ & $e_5$ & $e_6$ & $e_7$ & $e_8$ & $e_9$ & $e_{10}$ & $e_{11}$ & $e_{12}$ & $e_{13}$ & $e_{14}$ & $e_{15}$\\\hline
 $e_1$ & $-1$ & $e_3$ & $-e_2$ & $e_5$ & $-e_4$ & $-e_7$ & $e_6$ & $e_9$ & $-e_8$ & $-e_{11}$ & $e_{10}$ & $-e_{13}$ & $e_{12}$ & $e_{15}$ & $-e_{14}$\\\hline
  $e_2$ & $-e_3$ & $-1$ & $e_1$ & $e_6$ & $e_7$ & $-e_4$ & $-e_5$ & $e_{10}$ & $e_{11}$ & $-e_8$ & $-e_9$ & $-e_{14}$ & $-e_{15}$ & $e_{12}$ & $e_{13}$\\\hline
   $e_3$ & $e_2$ & $-e_1$ & $-1$ & $e_7$ & $-e_6$ & $e_5$ & $-e_4$ & $e_{11}$ & $-e_{10}$ & $e_9$ & $-e_8$ & $-e_{15}$& $e_{14}$ & $-e_{13}$ &$e_{12}$\\\hline
    $e_4$ &$-e_5$ & $-e_6$ & $-e_7$ & $-1$ & $e_1$ & $e_2$ & $e_3$ & $e_{12}$ & $e_{13}$ & $e_{14}$ &$e_{15}$ & $-e_8$ & $-e_9$ & $-e_{10}$ & $-e_{11}$\\\hline
     $e_5$ & $e_4$ & $-e_7$ & $e_6$ &$-e_1$ & $-1$ & $-e_3$ & $e_2$ & $e_{13}$ & $-e_{12}$ & $e_{15}$ & $-e_{14}$ & $e_9$ &$-e_8$ & $e_{11}$ & $-e_{10}$\\\hline
      $e_6$ & $e_7$ & $e_4$ & $-e_5$ & $-e_2$ & $e_3$ & $-1$ &$-e_1$ & $e_{14}$ & $-e_{15}$ & $-e_{12}$ & $e_{13}$ & $e_{10}$ & $-e_{11}$ & $-e_8$ & $e_9$\\\hline
       $e_7$ & $-e_6$ & $e_5$ & $e_4$ & $-e_3$ & $-e_2$ & $e_1$ & $-1$ & $e_{15}$ &$e_{14}$ & $-e_{13}$ & $-e_{12}$ & $e_{11}$ & $e_{10}$ & $-e_9$ & $-e_8$\\\hline
        $e_8$ & $-e_9$ &$-e_{10}$ & $-e_{11}$ & $-e_{12}$ & $-e_{13}$ & $-e_{14}$ & $-e_{15}$ & $-1$ & $e_1$ & $e_2$ &$e_3$ & $e_4$ & $e_5$ & $e_6$ & $e_7$\\\hline
         $e_9$ & $e_8$ & $-e_{11}$ & $e_{10}$ & $-e_{13}$ &$e_{12}$ & $e_{15}$ & $-e_{14}$ & $-e_1$ & $-1$ & $-e_3$ & $e_2$ & $-e_5$ & $e_4$ & $e_7$ & $-e_6$\\\hline
          $e_{10}$ & $e_{11}$ & $e_8$ & $-e_9$ & $-e_{14}$ & $-e_{15}$ & $e_{12}$ & $e_{13}$ &$-e_2$ & $e_3$ & $-1$ & $-e_1$ & $-e_6$ & $-e_7$ & $e_4$ & $e_5$\\\hline
           $e_{11}$ &$-e_{10}$ & $e_9$ & $e_8$ & $-e_{15}$ & $e_{14}$ & $-e_{13}$ & $e_{12}$ & $-e_3$ & $-e_2$ &$e_1$ & $-1$ & $-e_7$ & $e_6$ & $-e_5$ & $e_4$\\\hline
            $e_{12}$ & $e_{13}$ & $e_{14}$ & $e_{15}$ &$e_8$ & $-e_9$ & $-e_{10}$ & $-e_{11}$ & $-e_4$ & $e_5$ & $e_6$ & $e_7$ & $-1$ & $-e_1$ &$-e_2$ & $-e_3$\\\hline
             $e_{13}$ & $-e_{12}$ & $e_{15}$ & $-e_{14}$ & $e_9$ & $e_8$ & $e_{11}$ & $-e_{10}$ & $-e_5$ & $-e_4$ & $e_7$ & $-e_6$ & $e_1$ & $-1$ & $e_3$ & $-e_2$\\\hline
              $e_{14}$ & $-e_{15}$ & $-e_{12}$ & $e_{13}$ & $e_{10}$ & $-e_{11}$ & $e_8$ & $e_9$ &$-e_6$ & $-e_7$ & $-e_4$ & $e_5$ & $e_2$ & $-e_3$ & $-1$ & $e_1$\\\hline
               $e_{15}$ & $e_{14}$ &$-e_{13}$ & $-e_{12}$ & $e_{11}$ & $e_{10}$ & $-e_9$ & $e_8$ & $-e_7$ & $e_6$ & $-e_5$ & $-e_4$ & $e_3$ & $e_2$ & $-e_1$& $-1$\\\hline
\end{tabular}
\end{tiny}
\end{center}
The sedenions have zero divisors and they are not an alternative algebra.  Anyhow, we shall see that they are close enough to alternative division algebras, so that these approximate properties are "almost" characteristic for $\SSS$. Let us recall the definition of another notion needed for dealing with these properties.

An algebra $A$ is said to be a {\em superalgebra} if it is $\ZZ_2$-graded, i.e., there exist linear subspaces $A_i$, $i\in \mathbb Z_2$, such that $A=A_0\oplus A_1$ and $A_iA_j\subseteq A_{i+j}$ for all $i,j\in \ZZ_2$. We call $A_0$ an {\em even} and $A_1$ an {\em odd} part of $A$.  Elements in $A_0\cup A_1$ are said to be {\em homogeneous}. Note that if $A$ is unital, then $1\in A_0$.

Cayley-Dickson algebras possess a natural superalgebra structure. Indeed,  $A=\AAA_n$ becomes a superalgebra by defining $A_0 = \AAA_{n-1}\times 0$ and $A_1 = 0\times \AAA_{n-1}$. This simple observation is the concept behind the contents of Section \ref{Secsuper}. 

The algebras $\AAA_n$, $n\ge 4$, are not alternative, but at least they have certain nonscalar elements that share many properties with elements in alternative algebras: these are scalar multiples of the element $e=(0,1)$, where $1$ is of course the unity of $\AAA_{n-1}$ (see e.g. \cite[Section 5]{Biss}).  Let us point out only one property that is sufficient for our purposes: $e$ satisfies $x^2 e = x(xe)$ for all $x\in\AAA_n$. This can be easily verified. Moreover, this property is "almost" characteristic for $e$: only elements in the linear span of $1$ and $e$ satisfy this identity for every $x$ \cite[Lemma 1.2]{AS} (the authors are thankful to Alberto Elduque for drawing their attention to this result).
Now, let us call an element $a$ in an arbitrary nonassociative algebra $A$ an {\em alter-scalar} if $a$ is not a scalar and satisfies   $x^2 a = x(xa)$ holds for all $x\in A$. (A similar, but not exactly the same notion of a strongly alternative element was defined in \cite{Moreno2}. There is also a standard notion of an alternative element defined through the condition $a^2x = a(ax)$ for every $x$, but this is too weak for our goals). What is important for us is that $\SSS$ contains   alter-scalars. With respect to the notation introduced above, these are nonzero scalar multiplies of  $e_8$.  Thus, the standard basis of $\SSS$ has an element that is in some sense "better" than the others. This does not seem to be the case with the preceding Cayley-Dickson algebras.

 Next we recall that an algebra 
$A$ is said to be {\em quadratic} if it is unital and  the elements $1,x,x^2$ are linearly dependent for every $x\in A$. 
Thus, for every
$x\in A$ there exist $t(x),n(x)  \in \RR$ such that $x^{2} - t(x)x + n(x)=0$.  Obviously, $t(x)$ and $n(x)$ are uniquely determined if $x\notin \RR$. Setting $t(\lambda) = 2\lambda$ and $n(\lambda)= \lambda^2$ for $\lambda\in\RR$, we can then consider $t$ and $n$ as maps from $A$ into $\RR$ (the reason for this definition is that in this way $t$ becomes a linear functional, but we shall not need this).  We call $t(x)$ and $n(x)$  the {\em trace}  and the {\em norm} of $x$, respectively. For some elementary properties of quadratic algebras, a characterization of quadratic alternative algebras, and further references  we refer to \cite{Eld}.

From $x^2 - (x+x^*)x + x^*x =0$ we see that all algebras $\AAA_n$ are quadratic. Further,
every  real division algebra $A$ that is algebraic and power-associative (this means that every subalgebra generated by one element is associative) is automatically quadratic. Indeed, if $x\in A$  then there exists a nonzero polynomial $f(X)\in\RR[X]$ such that $f(x)=0$.
Writing $f(X)$ as the product of linear and quadratic polynomials in $\RR[X]$  it follows that $p(x)=0$ for some $p(X)\in\RR[X]$ of degree $1$ or $2$.  In particular, algebraic alternative (and hence associative) real division algebras are quadratic.

Finally, if  $A$ is a real unital  algebra, i.e., an algebra over $\RR$ with unity $1$, then we    
shall follow a standard convention and identify $\RR$ with $\RR 1$; thus we shall write $\lambda$ for $\lambda 1$, where $\lambda\in\RR$.

\section{Frobenius' and Zorn's theorems}

Our first lemma is well known. It describes one of the basic properties of quadratic algebras.  We give the proof for the sake of completness. 

\begin{lemma} \label{L0}
Let $A$ be a quadratic real algebra. Then $U=\{u\in A\setminus{\RR}\,|\,u^2 \in \RR\}\cup\{0\}$ is a linear subspace of $A$,  $uv+vu\in \RR$ for all $u,v\in U$, and $A=\RR\oplus U$. 
\end{lemma}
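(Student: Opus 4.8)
The plan is to derive the whole statement from the single quadratic relation $x^2 - t(x)x + n(x) = 0$ and from the uniqueness of $t(x), n(x)$. First I would complete the square: for $x \in A \setminus \RR$ the element $u := x - \tfrac12 t(x)$ is not a scalar (otherwise $x$ would be one) and satisfies $u^2 = \tfrac14 t(x)^2 - n(x) \in \RR$, so $u \in U$ and $x = \tfrac12 t(x) + u \in \RR + U$; taking $u = 0$ for $x \in \RR$, this gives $A = \RR + U$. Since $U \setminus \{0\} \subseteq A \setminus \RR$ we have $\RR \cap U = \{0\}$, and $U$ is visibly closed under multiplication by scalars, so the whole lemma reduces to proving $uv + vu \in \RR$ for all $u, v \in U$.

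The technical heart is the observation that \emph{if $u, v \in U$ are linearly independent, then $1, u, v$ are linearly independent too.} Indeed, otherwise $v$ would lie in the span of $1$ and $u$, say $v = \beta + \gamma u$ with $\beta, \gamma \in \RR$ and $\gamma \ne 0$ (since $v \notin \RR$); then $\gamma^2 u^2 = (v - \beta)^2 = v^2 - 2\beta v + \beta^2$, and as $u^2, v^2 \in \RR$ this forces $2\beta v \in \RR$, hence $\beta = 0$ and $v = \gamma u$ --- a contradiction.

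Granting this, fix $u, v \in U$. If they are linearly dependent the claim is immediate, since $v = \lambda u$ gives $uv + vu = 2\lambda u^2 \in \RR$; so assume $u, v$ linearly independent, whence $1, u, v$ are linearly independent and, in particular, $u \pm v \notin \RR$. Substituting $x = u + v$ into $x^2 = t(x)x - n(x)$ and using $(u + v)^2 = u^2 + v^2 + (uv + vu)$ gives $uv + vu = t(u+v)\,u + t(u+v)\,v + c_1$ with $c_1 \in \RR$; the same computation with $x = u - v$ gives $uv + vu = -t(u-v)\,u + t(u-v)\,v + c_2$ with $c_2 \in \RR$. Comparing the coefficients of $u$ and of $v$ in the basis $\{1, u, v\}$ yields $t(u+v) = -t(u-v)$ and $t(u+v) = t(u-v)$, so both traces vanish and $uv + vu = c_1 \in \RR$. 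Closure of $U$ under addition now follows, since $(u + v)^2 = u^2 + v^2 + (uv + vu) \in \RR$ and $u + v$ is either $0$ or not a scalar (if $u + v = \mu \in \RR \setminus \{0\}$ then $v = \mu - u$ and $2\mu u = \mu^2 + u^2 - v^2 \in \RR$ would force $u \in \RR$); hence $u + v \in U$. Together with $A = \RR + U$ and $\RR \cap U = \{0\}$, this gives $A = \RR \oplus U$.

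I expect the main obstacle to be precisely the linear independence statement in the second paragraph. The tempting shortcut --- decompose $A = \RR + U$ and ``project'' $uv + vu$ onto its $U$-component --- is circular, because linearity of that projection is essentially the subspace property one is trying to prove; the independence of $1, u, v$, which is where the defining conditions $u^2, v^2 \in \RR$ and $u, v \notin \RR$ are genuinely used, is what breaks the circle.
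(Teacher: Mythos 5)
Your proof is correct and follows essentially the same route as the paper's: both arguments apply the quadratic relation to $u+v$ and $u-v$ and use the linear independence of $1,u,v$ to force the traces to vanish, the only reordering being that you deduce $uv+vu\in\RR$ first and closure under addition second. Your second paragraph simply spells out the step the paper dismisses with ``one easily notices,'' which is a reasonable thing to include.
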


\begin{proof} Obviously, $U$ is closed under scalar multiplication.  We have to show that $u,v\in U$ implies $u+v\in U$. If $u,v,1$ are linearly dependent, then one easily notices that  already $u$ and $v$ are dependent, and the result follows. Thus, let $u,v,1$ be independent. We have $(u+v)^2 + (u-v)^2=2u^2 + 2v^2 \in \RR$. On the other hand, as $A$ is quadratic there exist
$\lambda,\mu\in\RR$ such that $(u+v)^2 - \lambda(u+v)\in\RR$ and $(u-v)^2 - \mu(u-v)\in\RR$, and hence
$ \lambda(u+v) + \mu(u-v)\in \RR$.  However, the independence of $1,u,v$ implies $\lambda + \mu = \lambda -\mu =0$, so that $\lambda=\mu=0$. This  proves that   $u\pm v\in U$. Thus $U$ is indeed a subspace of $A$. Accordingly,
$u v + v u = (u+v)^{2} - u^{2} - v^{2}\in\RR$ for all $u,v\in U$. Finally, 
if $a\in A\setminus{\RR}$, then $a^2 - \nu a\in\RR$ for some $\nu\in \RR$, and therefore $u=a -\frac{\nu}{2}\in U$; thus, $a =  \frac{\nu}{2} + u\in \RR\oplus U$. 
\end{proof}

\begin{remark} \label{Rem1}
If $A$ is additionally a division algebra, then every nonzero $u\in U$ can be written as $u=\alpha v$ with
$\alpha\in \RR$ and $v^2 =-1$. Indeed, since $u^2\in\RR$ and since $u^2$ cannot be $\ge 0$  (otherwise $(u-\alpha)(u+\alpha) = u^2 - \alpha^2$ would be $0$ for some $\alpha\in\RR$) we have $u^2=- \alpha^2$ with $0\ne \alpha\in\RR$. Thus, $v = \alpha^{-1}u$ is a desired element.
\end{remark}

Note that
by $\langle u,v\rangle = - \frac{1}{2}(uv +vu)$ one defines an inner product on $U$ if $A$ is a division algebra. The next lemma therefore deals with nothing but the  Gram-Schmidt process. Nevertheless, we give the proof.

\begin{lemma} \label{L1}
Let $A$ be a  quadratic real division algebra, and let $U$ be as in Lemma \ref{L0}. Suppose  $e_1,\ldots,e_k\in U$ are such that $e_i^{2} = -1$  for all $i\le k$ and $e_ie_j =- e_j e_i$ for all  $i,j\le k$, $i\ne j$. If $U$ is not equal to the linear span of $e_1,\ldots,e_k$, then there exists $e_{k+1}\in U$ such that $e_{k+1}^{2} = -1$ and $e_i e_{k+1} =- e_{k+1} e_i$ for all $i\le k$.
\end{lemma}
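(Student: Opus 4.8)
The plan is to mimic one step of the Gram--Schmidt orthogonalisation process, using the inner product $\langle u,v\rangle = -\frac12(uv+vu)$ on $U$ that is available by Lemma~\ref{L0} (together with Remark~\ref{Rem1}, which guarantees it is positive definite on $U$ once $A$ is a division algebra). Since $U$ is strictly larger than the span of $e_1,\dots,e_k$, pick any $u\in U$ outside that span. First I would correct $u$ by subtracting its components along the $e_i$: set
\begin{equation*}
w = u - \sum_{i=1}^k \langle u,e_i\rangle e_i = u + \frac12\sum_{i=1}^k (u e_i + e_i u)\, e_i .
\end{equation*}
Because $\langle e_i,e_j\rangle = \delta_{ij}$ (this is exactly the hypothesis $e_i^2=-1$, $e_ie_j=-e_je_i$), one checks $\langle w,e_i\rangle = 0$ for all $i\le k$, i.e.\ $we_i+e_iw=0$ for all $i\le k$. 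Also $w\neq 0$, since otherwise $u$ would lie in the span of the $e_i$; and $w\in U$ because $U$ is a linear subspace (Lemma~\ref{L0}) and the $e_i$ and $u$ all lie in $U$.

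Next I would normalise. Since $0\neq w\in U$, Remark~\ref{Rem1} gives $w = \alpha v$ with $\alpha\in\RR\setminus\{0\}$ and $v^2=-1$. Put $e_{k+1}=v$. Then $e_{k+1}^2=-1$ by construction, and $e_i e_{k+1} = \alpha^{-1} e_i w = -\alpha^{-1} w e_i = -e_{k+1} e_i$ for each $i\le k$, so all the required anticommutation relations hold. This $e_{k+1}$ is the desired element.

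The only genuine point needing care — and the step I would expect to be the main (small) obstacle — is verifying that the corrected element $w$ really does anticommute with each $e_j$, since $A$ is merely nonassociative and the manipulation $\langle w,e_j\rangle = \langle u,e_j\rangle - \sum_i \langle u,e_i\rangle\langle e_i,e_j\rangle$ must be justified purely bilinearly rather than by reordering products. But this is immediate: $w e_j + e_j w = \langle\,\cdot\,,\cdot\,\rangle$-linearity gives $-2\langle w,e_j\rangle = -2\langle u,e_j\rangle + \sum_i 2\langle u,e_i\rangle\langle e_i,e_j\rangle = -2\langle u,e_j\rangle + 2\langle u,e_j\rangle = 0$, using only that $\langle\cdot,\cdot\rangle$ is a symmetric bilinear form on the subspace $U$ and that $\langle e_i,e_j\rangle=\delta_{ij}$. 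No associativity of $A$ is invoked anywhere. Everything else is bookkeeping, and the lemma follows.
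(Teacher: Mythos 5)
Your proof is correct and is essentially identical to the paper's: the paper sets $\alpha_i=\tfrac12(ue_i+e_iu)$ and takes $v=u+\sum_i\alpha_ie_i$, which is exactly your $w=u-\sum_i\langle u,e_i\rangle e_i$, then normalises via Remark~\ref{Rem1}. The paper even frames the lemma as ``nothing but the Gram--Schmidt process,'' so your presentation matches it in both substance and spirit.
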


\begin{proof}
 Pick $u\in U$ that is not contained in the linear span of $e_1,\ldots,e_k$, and set 
$\alpha_i = \frac{1}{2}(ue_i + e_i u) \in \RR$ (by Lemma \ref{L0}). Note that 
$v = u + \alpha_1 e_1+\ldots +\alpha_k e_k$ satisfies $e_i v = -v e_i$ for all $i\le k$. Let $e_{k+1}$ be a scalar multiple of $v$ such that $e_{k+1}^{2} = -1$ (Remark \ref{Rem1}). Then $e_{k+1}$ has all desired properties. 
\end{proof}

\begin{theorem} \label{TF}
{\bf (Frobenius' theorem)}  An algebraic  associative real division algebra $A$ is isomorphic to $\RR$, $\CC$, or $\HH$.
\end{theorem}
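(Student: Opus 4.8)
The plan is to build an orthonormal-type basis of the quadratic structure space $U$ from Lemma \ref{L0}, then use associativity to force the dimension of $U$ to be at most $2$. First I would observe that an algebraic associative real division algebra is quadratic (as noted in the Preliminaries, via power-associativity and the factorization of the minimal polynomial into linear and quadratic factors over $\RR$), so Lemma \ref{L0} applies and $A = \RR \oplus U$ with $uv + vu \in \RR$ for all $u,v \in U$. If $U = 0$ then $A \cong \RR$; if $\dim U = 1$, pick $u \in U$ nonzero, rescale via Remark \ref{Rem1} to get $e_1$ with $e_1^2 = -1$, and then $A = \RR \oplus \RR e_1 \cong \CC$.

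Next I would treat the case $\dim U \ge 2$. Using Lemma \ref{L1} repeatedly (starting from a single $e_1$ with $e_1^2 = -1$), I obtain $e_1, e_2 \in U$ with $e_1^2 = e_2^2 = -1$ and $e_1 e_2 = -e_2 e_1$. Set $e_3 = e_1 e_2$. A short computation using associativity shows $e_3^2 = e_1 e_2 e_1 e_2 = -e_1 e_1 e_2 e_2 = -(-1)(-1) = -1$, and similarly $e_3$ anticommutes with both $e_1$ and $e_2$ (e.g. $e_1 e_3 = e_1 e_1 e_2 = -e_2$ while $e_3 e_1 = e_1 e_2 e_1 = -e_1 e_1 e_2 = e_2$, so $e_1 e_3 = -e_3 e_1$, and likewise for $e_2$). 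Hence $e_3 \in U$ and $1, e_1, e_2, e_3$ are linearly independent, so $\dim A \ge 4$, and the span of $1, e_1, e_2, e_3$ is a subalgebra isomorphic to $\HH$.

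Finally I would show $A = \RR \oplus \RR e_1 \oplus \RR e_2 \oplus \RR e_3$, i.e. that $U$ cannot have dimension $\ge 4$. Suppose it does; then by Lemma \ref{L1} there is $e_4 \in U$ with $e_4^2 = -1$ anticommuting with $e_1, e_2, e_3$. Now compute $e_1 e_2 e_4$ two ways using associativity: on one hand $e_1(e_2 e_4) = e_1(e_2 e_4)$; on the other, $(e_1 e_2) e_4 = e_3 e_4$. Comparing with the product $e_3 e_4$ and using that $e_3 = e_1 e_2$ anticommutes with $e_4$ while each $e_i$ does too, one derives a contradiction: concretely $e_3 e_4 = (e_1 e_2) e_4 = e_1 (e_2 e_4) = -e_1(e_4 e_2) = -(e_1 e_4) e_2 = (e_4 e_1) e_2 = e_4 (e_1 e_2) = e_4 e_3 = - e_3 e_4$, forcing $e_3 e_4 = 0$, which is impossible in a division algebra. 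Hence $\dim U \le 3$ in the noncommutative case, so $A \cong \HH$, completing the three cases.

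The main obstacle is organizing the associativity manipulations cleanly so that the anticommutation relations among $e_1, e_2, e_3 = e_1 e_2$ are established without circular reasoning, and making sure the final contradiction step genuinely uses both associativity and the absence of zero divisors; everything else (quadraticity, the Gram–Schmidt step, the base cases) is handed to us by the preceding lemmas and remarks. I would also note that this same construction is exactly the starting point for the proof of Zorn's theorem promised in the introduction, where dropping associativity to alternativity allows one more step and produces $\OO$.
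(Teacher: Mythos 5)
Your proposal is correct and follows essentially the same route as the paper: quadraticity plus Lemma \ref{L0}, Remark \ref{Rem1}, and Lemma \ref{L1} to build anticommuting square roots of $-1$, the identification $e_3=e_1e_2$ giving a copy of $\HH$, and the final contradiction obtained by sliding $e_4$ past $e_1$ and $e_2$ via associativity to force $e_3e_4=e_4e_3=-e_3e_4$. The paper's version of that last step is the identity $e(ij)=-i(ej)=(ij)e$ contradicting $ek=-ke$, which is the same computation written with the new element on the other side.
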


\begin{proof} As pointed out at the end of Section \ref{Prel}, $A$ is quadratic. We may assume that $n=\dim A\ge 2$. By Remark \ref{Rem1} we can
fix $i\in A$ such that $i^2=-1$. Thus, $A\cong\CC$ if $n=2$.  Let $n > 2$. By Lemma \ref{L1} there is $j\in A$ such that $j^2=-1$ and $ij=-ji$. Set $k=ij$. Now one immediately checks that $k^2 =-1$, $ki = j =-ik$, $jk=i=-kj$, and  $i,j,k$ are linearly independent. Therefore $A$ contains a subalgebra isomorphic to $\HH$. It remains to show that $n$ is not $>4$. If it was, then by Lemma \ref{L1} there would exist
$e\in A$ such that $e\ne 0$, $ei = -ie$, $ej = - je$, and $ek=-ke$. However, from the first two identities we infer $ eij= -iej=ije$; since $ij=k$, this contradicts the third identity.
\end{proof}

In standard graduate algebra textbooks one can find different proofs of Frobenius' theorem. In some of them the advanced theory is used, but there are also such that use only elementary tools, e.g.,  \cite{Her} and \cite{Lam}. The proof in \cite{Her} is actually based on similar ideas than our proof, but it is considerably lengthier. The one in \cite{Lam}
(which is based on \cite{Pal}) is different, and also  short. 

We believe that our proof, consisting of four simple steps (Lemma \ref{L0}, Remark \ref{Rem1}, Lemma \ref{L1}, and the final proof), should be easily understandable to  undergraduate students. Some of these steps, especially both lemmas, are of independent interest.

We now switch to the proof of Zorn's theorem. We  need a simple lemma:

\begin{lemma} \label{L2}
Let $A$ be an alternative algebra, and let $e_1,\ldots,e_k\in A$ be such that $e_ie_j\in\{e_1,\ldots,e_k\}$ whenever $i\ne j$. If $w\in A$ is such that  $e_iw=-we_i$ for every $i$, then $(e_ie_j)w=-e_i(e_jw)$ and $w(e_ie_j)=-(we_i)e_j$ whenever $i\neq j$.
 \end{lemma}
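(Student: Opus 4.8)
The plan is to compute both claimed identities directly from the linearized alternativity relations \eqref{eq2}, using the anticommutation hypothesis $e_i w = -w e_i$ together with the fact that $e_i e_j$ and $e_j e_i$ lie in $\{e_1,\dots,e_k\}$ (so they, too, anticommute with $w$). First I would note that since $e_i e_j \in \{e_1,\dots,e_k\}$, the hypothesis applies to it as well, giving $(e_ie_j)w = -w(e_ie_j)$, and likewise $(e_je_i)w = -w(e_je_i)$; this is the small observation that makes the linearized identities usable.

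For the first identity, I would apply the left-hand relation in \eqref{eq2} with $x = e_i$, $z = e_j$, $y = w$: this yields $(e_ie_j + e_je_i)w = e_i(e_jw) + e_j(e_iw)$. Now I use $e_jw = -we_j$ and $e_iw = -we_i$ on the right, and on the left I use that $(e_ie_j)w = -w(e_ie_j)$ and $(e_je_i)w = -w(e_je_i)$, rewriting the left side as $-(we_i)e_j - (we_j)e_i$ after re-expressing — actually it is cleaner to also apply \eqref{eq2} the other way. Let me instead be systematic: apply the left identity of \eqref{eq2} to get $(e_ie_j+e_je_i)w = e_i(e_jw)+e_j(e_iw) = -e_i(we_j) - e_j(we_i)$. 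Separately, apply the right identity of \eqref{eq2} with $y=w$, $x=e_i$, $z=e_j$: $w(e_ie_j+e_je_i) = (we_i)e_j+(we_j)e_i$. Since each of $e_ie_j$, $e_je_i$ anticommutes with $w$, the left-hand side of this second equation equals $-(e_ie_j+e_je_i)w$, so combining the two gives $e_i(we_j)+e_j(we_i) = (we_i)e_j+(we_j)e_i$. Replacing $we_j = -e_jw$ and $we_i=-e_iw$ turns this into $-e_i(e_jw)-e_j(e_iw) = -(e_iw)e_j\cdot(\text{sign})\dots$; I will need to track signs carefully here, but the upshot is a relation among $e_i(e_jw)$, $e_j(e_iw)$, $(e_iw)e_j$, $(e_jw)e_i$. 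To isolate $(e_ie_j)w = -e_i(e_jw)$ I would then additionally invoke flexibility/Artin's theorem inside the subalgebra generated by $e_i$ and $w$ (or by $e_j$ and $w$), which is associative, to handle terms like $e_i(e_iw)$ and $(we_i)e_i$, and combine with the displayed relations to solve for the desired quantity. The second claimed identity $w(e_ie_j) = -(we_i)e_j$ is handled symmetrically, swapping the roles of left and right multiplication throughout.

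The main obstacle I anticipate is purely bookkeeping: the linearized identities \eqref{eq2} produce symmetric sums $e_ie_j + e_je_i$ rather than $e_ie_j$ alone, so to extract the single product one must pair the left-version and right-version of \eqref{eq2} and cancel the symmetric part, being careful that $e_je_i$ (not just $e_ie_j$) also anticommutes with $w$. The sign tracking through three or four substitutions of $e_i w = -w e_i$ is where an error is most likely, so in the write-up I would present the two applications of \eqref{eq2} as displayed equations, then do the substitutions in a single clearly-labeled chain. No deep input is needed beyond \eqref{eq2}, the hypothesis, and the elementary observation that products of the $e_i$'s stay within the anticommuting set.
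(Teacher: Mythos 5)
Your opening observation is the right one --- the closure hypothesis is there precisely so that $e_ie_j$ and $e_je_i$ also anticommute with $w$ --- but the substitution you then make into \eqref{eq2} is the wrong one, and the computation you sketch cannot be completed. Putting $w$ in the $y$-slot ($x=e_i$, $z=e_j$, $y=w$) leaves the symmetric sum $e_ie_j+e_je_i$ on the left-hand sides, and when you pair the two identities of \eqref{eq2} and cancel that sum using the anticommutation, all you obtain is
\[
e_i(e_jw)+e_j(e_iw)=(e_iw)e_j+(e_jw)e_i,
\]
a single relation invariant under $i\leftrightarrow j$. The conclusions of the lemma concern each product $e_ie_j$ individually: writing $P=(e_ie_j)w$ and $Q=(e_je_i)w$, your two displayed equations determine only $P+Q$, never $P$ by itself, so the desired antisymmetric identities cannot be extracted from them. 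The patch you propose --- Artin's theorem in the subalgebra generated by $e_i$ and $w$, to handle ``terms like $e_i(e_iw)$ and $(we_i)e_i$'' --- does not apply, because no such terms occur anywhere in the computation; every associator in sight involves the three distinct elements $e_i$, $e_j$, $w$, about which two-generated associativity says nothing.

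The fix is to put $w$ into one of the two symmetrized slots, i.e., take $x=e_i$, $y=e_j$, $z=w$ in \eqref{eq2}; this is exactly the paper's (one-line) proof. Then $xz+zx=e_iw+we_i=0$ by hypothesis, so the first identity collapses to $0=e_i(we_j)+w(e_ie_j)$, hence $w(e_ie_j)=-e_i(we_j)=e_i(e_jw)$, and the anticommutation of $e_ie_j$ with $w$ (the one place the closure hypothesis enters) turns this into $(e_ie_j)w=-e_i(e_jw)$. The second identity collapses to $0=(e_je_i)w+(e_jw)e_i$, and the same conversion followed by swapping $i$ and $j$ gives $w(e_ie_j)=-(we_i)e_j$. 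Nothing beyond \eqref{eq2} and your own key observation is needed --- but the observation has to be combined with the substitution that annihilates $e_iw+we_i$, not the one that symmetrizes over $e_i$ and $e_j$.
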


\begin{proof} Just set $x=e_i$, $y=e_j$, and $z=w$  in \eqref{eq2}, and the result follows.
\end{proof}

\begin{theorem} \label{TZ}
{\bf (Zorn's theorem)} An algebraic  alternative real division algebra $A$ is isomorphic to $\RR$, $\CC$, $\HH$, or $\OO$.
\end{theorem}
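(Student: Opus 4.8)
The plan is to continue directly from the proof of Frobenius' theorem. As before, $A$ is quadratic by the remark at the end of Section \ref{Prel}, and we may assume $n=\dim A > 4$, since otherwise $A$ is one of $\RR$, $\CC$, $\HH$ by the argument in the proof of Theorem \ref{TF} (which used only alternativity, or rather only the associativity of the subalgebra generated by $i$ and $j$, which holds in an alternative algebra by Artin's theorem). So we already have linearly independent $i,j,k\in A$ with $i^2=j^2=k^2=-1$, $ij=k$, $jk=i$, $ki=j$, and the analogous anticommuting relations; write $e_1=i$, $e_2=j$, $e_3=k$. Since $n>4$, Lemma \ref{L1} gives $e_4\in U$ with $e_4^2=-1$ and $e_4$ anticommuting with each of $e_1,e_2,e_3$.

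The key step is to show that $e_5:=e_1e_4,\ e_6:=e_2e_4,\ e_7:=e_3e_4$ together with $1,e_1,\dots,e_4$ span an $8$-dimensional subalgebra isomorphic to $\OO$. First I would apply Lemma \ref{L2} with $w=e_4$ and $\{e_1,e_2,e_3\}$ (note $e_ie_j\in\{e_1,e_2,e_3\}$ for $i\ne j\le 3$) to get relations like $(e_1e_2)e_4=-e_1(e_2e_4)$, i.e. $e_3e_4=-e_1e_6$, hence $e_7=-e_1e_6$; similarly for the other products of $e_i$ with $e_{4+j}$. Then, using flexibility and the Moufang identity \eqref{eq3}, together with $e_ie_4=-e_4e_i$ and $e_i^2=e_4^2=-1$, one computes all products among $e_1,\dots,e_7$: for instance $e_5^2=(e_1e_4)(e_1e_4)=-(e_1e_4)(e_4e_1)=-e_1e_4^2e_1=e_1^2=-1$ by \eqref{eq3}, and products like $e_1e_5=e_1(e_1e_4)=e_1^2e_4=-e_4$ by alternativity. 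Carrying this out systematically reproduces the octonion multiplication table given in Section \ref{Prel}, so $B:=\mathrm{span}\{1,e_1,\dots,e_7\}$ is a subalgebra isomorphic to $\OO$. This is the routine-but-substantial part: one must verify enough products to pin down the table, relying throughout on Artin's theorem to manipulate expressions in two generators associatively, on flexibility to write $e_ie_4e_i$ unambiguously, and on \eqref{eq3} for the genuinely three-variable products.

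Finally, I would rule out $n>8$ by the same mechanism as in the proof of Theorem \ref{TF}. If $n>8$, Lemma \ref{L1} produces $e\in U$, $e\ne 0$, with $ee_i=-e_ie$ for $i=1,\dots,4$ (we only need these four). Then Lemma \ref{L2} applied with $w=e$ and the set $\{e_1,\dots,e_7\}$ (whose pairwise products lie in $\{e_1,\dots,e_7\}$, as we have just established) yields $(e_1e_2)e=-e_1(e_2e)$ and similar identities. Now compute $e_5e=(e_1e_4)e=-e_1(e_4e)=e_1(ee_4)$; on the other hand $e_1(ee_4)=-(e_1e)e_4\cdot(\pm1)$ after another application of Lemma \ref{L2}, and chasing these anticommutations around the relation $e_5=e_1e_4$ together with $e_1e_5=-e_4$ forces a contradiction, exactly as "$e_{ij}w=ij\cdot we$ versus $ij=k$" did in the quaternion case. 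Concretely: from $e_1e=-ee_1$ and $e_4e=-ee_4$ one gets $e(e_1e_4)=(ee_1)e_4\cdot? $ — the precise sign bookkeeping via \eqref{eq2}/Lemma \ref{L2} shows $e$ would have to both commute and anticommute with $e_5=e_1e_4$, whence $e_5e=0$, contradicting that $A$ is a division algebra. Hence $n=8$ and $A\cong\OO$.

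The main obstacle is the middle step: unlike the quaternion case, where $k=ij$ and the three relations close up almost immediately, here one has the genuinely nonassociative algebra $\OO$ to reconstruct, and the bookkeeping of signs in the $7\times 7$ table is where care is needed. The tools are already in place, though — Lemma \ref{L2} reduces all the "new times old" products ($e_{4+i}$ times $e_j$) to known ones, Artin's theorem handles everything involving only two of the generators, and \eqref{eq3} covers the rest — so no new idea beyond those used for Frobenius' theorem is required; the proof of Zorn's theorem is genuinely just its continuation.
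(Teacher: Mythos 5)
Your construction of the copy of $\OO$ inside $A$ follows the paper's proof essentially step for step ($e_5=e_1e_4$, $e_6=e_2e_4$, $e_7=e_3e_4$, with alternativity, \eqref{eq3} and Lemma \ref{L2} filling in the table), and that part is sound. The gap is in the final step ruling out $\dim A>8$. The computation you sketch does not produce a contradiction. From $ee_1=-e_1e$ and $ee_4=-e_4e$, the linearized identities \eqref{eq2} give $e(e_1e_4)=-(ee_1)e_4$ and $(e_1e_4)e=-e_1(e_4e)$ --- note the extra minus signs, absent in the associative case. Chasing these around only shows that $e$ \emph{anticommutes} with $e_5=e_1e_4$: indeed $e(e_1e_4)=-(ee_1)e_4=(e_1e)e_4=-(e_1e_4)e$, the last equality by the second identity of \eqref{eq2} applied to $e_1(ee_4+e_4e)=0$. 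This is perfectly consistent, so the ``commutes versus anticommutes'' clash that killed the quaternion case evaporates: each transposition of $e$ past a product now carries a sign, and no contradiction is available at the level of products of length two. The obstruction only appears one level deeper.

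The paper obtains the contradiction from the triple product $e_1(e_2(e_4f))$ evaluated in two ways. One way uses Lemma \ref{L2} with $w=f$ twice to get $e_1(e_2(e_4f))=-e_1((e_2e_4)f)=-e_1(e_6f)=(e_1e_6)f=-e_7f$. For the other way, the key observation --- and this is the idea missing from your proposal --- is that the element $e_4f$ \emph{itself} anticommutes with $e_1,e_2,e_3$ (checked by a short computation with \eqref{eq2}), so Lemma \ref{L2} can be applied a second time with $w=e_4f$ and $k=3$, giving $e_1(e_2(e_4f))=-(e_1e_2)(e_4f)=-e_3(e_4f)=(e_3e_4)f=e_7f$. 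Hence $e_7f=0$, contradicting that $A$ is a division algebra. Without this second application of Lemma \ref{L2} to the new element $e_4f$, the sign bookkeeping you describe closes up consistently and yields nothing. A minor further point: you take $e$ anticommuting only with $e_1,\dots,e_4$ but then invoke Lemma \ref{L2} for the set $\{e_1,\dots,e_7\}$, which requires anticommutation with all seven generators; the clean fix is to apply Lemma \ref{L1} with $k=7$, as the paper does.
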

\begin{proof}
Since a subalgebra generated by two elements is associative, the first part of the proof of Theorem \ref{TF} remains unchanged in the present context. We may therefore assume that $A$ contains a copy of $\HH$ and that $n=\dim A > 4$. Let us just change the notation and write $e_1 = i$, $e_2 = j$, and $e_3=k$. 
By Lemma \ref{L1} there exists $e_4\in A$ such that $e_4^2=-1$ and $e_4e_i=-e_ie_4$ for $i=1,2,3$. 
 Now define $e_5=e_1e_4$, $e_6=e_2e_4$, $e_7=e_3e_4$. Using the alternativity and anticommutativity relations we see that 
$$
 e_5^2=e_6^2=e_7^2=-1,
 $$
$$
  e_1e_5 = -e_5e_1 = e_2e_6=-e_6e_2 = e_3e_7=-e_7e_3= -e_4,
$$
$$
e_4e_5 = -e_5e_4 = e_1,\,\, e_4e_6 = -e_6e_4 = e_2,\,\, e_4 e_7 = -e_7e_4 = e_3.
$$
Further, using \eqref{eq3} we  obtain
$$
e_5e_6 = -e_6e_5 = -e_3 ,\,\, e_6e_7 = -e_7e_6 = -e_1,\,\, e_7e_5 = -e_5e_7 = -e_2.
$$
 Finally,  use Lemma \ref{L2} with $k=3$ and $w=e_4$, and note that the resulting identites  yield the rest of the multiplication table. 
 
 It is easy to see that $1,e_1,\ldots,e_7$ are linearly independent. Indeed, by taking squares we first see that $\sum_{i=1}^7 \lambda_ie_i$ cannot be a nonzero scalar; if $\sum_{i=1}^7 \lambda_ie_i =0$, then after multiplying this relation with $e_i$ we get $\lambda_i=0$. Thus, we have showed that $A$ contains $\OO$.

It remains to show that $n=8$. Suppose $n > 8$.  Then, by Lemma \ref{L1}, there exists $f\in A$ such that $f\ne 0$ and $fe_i=-e_if$, $1\le i\le 7$. Lemma \ref{L2} tells us that $f$ also satisfies $(e_ie_j)f=-e_i(e_jf)$ and $f(e_ie_j)=-(fe_i)e_j$ for $i\neq j$. Accordingly, 
\begin{equation}
\label{espe}
 e_1(e_2(e_4f))=-e_1((e_2e_4)f)= -e_1(e_6f) = (e_1e_6)f =- e_7f.
\end{equation}
Note that for  $1\le i\le 3$ we have
$$
e_i(e_4f)=-(e_ie_4)f=f(e_ie_4)=-f(e_4e_i)=(fe_4)e_i=-(e_4f)e_i.
$$
This makes it possible for us to apply 
Lemma \ref{L2} for $k=3$ and $w=e_4f$.  In particular this gives $(e_1e_2)(e_4f)=-e_1(e_2(e_4f))$. Consequently,
$$
e_1(e_2(e_4f)) = -e_3(e_4f) = (e_3e_4)f= e_7f,
$$
 contradicting \eqref{espe}. 
\end{proof}

\begin{remark} \label{Rdalje}
From the first part of the proof we see that if an alternative (not necessarily a division) real algebra $A$ contains a copy of $\HH$ and $\dim A > 4$, then it also contains a copy of $\OO$.
\end{remark}

Classical versions of Frobenius' and Zorn's theorems deal with finite dimensional algebras rather than with (slightly more general) algebraic ones. Our method, however, yields these  more general versions for free. But actually we shall need the more general version of Zorn's theorem in Section \ref{Secsuper}.
  
We cannot claim that any of the arguments given in this section is entirely original. After finding these proofs we have realized, when searching the literature, that many of these ideas appear in different texts. But to the best of our knowledge nobody has compiled these arguments in the same way that leads to short and direct proofs of theorems by Frobenius and Zorn. Therefore we hope and believe that this section is of some value.

\section{Locally complex algebras} \label{loccom}

As already mentioned, we define a {\em locally complex algebra} as  a real unital algebra $A$  such that every $a\in A\setminus{\RR}$ generates a subalgebra isomorphic to $\CC$.  
  A locally complex algebra $A$ is obviously quadratic. We can therefore consider the trace $t(a)$ and the norm $n(a)$ of each $a\in A$.

\begin{lemma}\label{Tlc} 
The following conditions are equivalent for a real unital algebra $A$:
\begin{enumerate}
\item[(i)] $A$ is locally complex;
\item[(ii)] every $0\ne a\in A$ has a multiplicative inverse lying in $\RR a + \RR$;
\item[(iii)] $A$ is quadratic and $A$ has no nontrivial idempotents  or square-zero elements;
\item[(iv)] $A$ is quadratic and  $n(a)> 0$ for every $0\ne a\in A$.
\end{enumerate}
Moreover, if $2\le \dim A = n < \infty$, then {\rm (i)-(iv)} are equivalent to 
\begin{enumerate}
\item[(v)] $A$ has a basis $\{1,e_1,\ldots, e_{n-1}\}$ such that $e_i^{2} = -1$  for all $i$ and $e_i e_j =- e_j e_i$ for all $i\ne j$. 
\end{enumerate}
\end{lemma}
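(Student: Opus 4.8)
The plan is to prove the cycle of implications (i) $\Rightarrow$ (ii) $\Rightarrow$ (iii) $\Rightarrow$ (iv) $\Rightarrow$ (i), and then, under the finite-dimensionality hypothesis, to show (i) $\Leftrightarrow$ (v).

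For (i) $\Rightarrow$ (ii): given $0\ne a\in A$, either $a\in\RR$, where the inverse is obvious and lies in $\RR$, or $a\notin\RR$, in which case the subalgebra $\RR[a]$ is isomorphic to $\CC$; since $\CC$ is a field, $a$ has an inverse in $\RR[a] = \RR a + \RR$. For (ii) $\Rightarrow$ (iii): first note that (ii) forces $A$ to be quadratic, since for $a\notin\RR$ the relation $a^{-1} = \alpha a + \beta$ with $\alpha\ne 0$ (it cannot be $0$ as then $1\in\RR a$) rearranges to a quadratic relation $a^2 - \alpha^{-1}a + \alpha^{-1}\beta\cdot(\ldots)$; more carefully, $1 = a a^{-1} = \alpha a^2 + \beta a$ gives $a^2 = \alpha^{-1}(1 - \beta a)$, so $1,a,a^2$ are dependent. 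Then if $e$ is an idempotent with $e\ne 0,1$, multiply $e^2 = e$ by $e^{-1}$ to get $e = 1$, a contradiction; if $a^2 = 0$ with $a\ne 0$, then $a$ has no inverse at all, contradicting (ii). For (iii) $\Rightarrow$ (iv): take $0\ne a\in A$; if $a\in\RR$ then $n(a) = a^2 > 0$; if $a\notin\RR$ write $a^2 - t(a)a + n(a) = 0$. If $n(a)\le 0$, factor $X^2 - t(a)X + n(a)$ over $\RR$ as $(X-\lambda)(X-\mu)$ with real roots $\lambda,\mu$ (possible since the discriminant $t(a)^2 - 4n(a)\ge 0$), so $(a-\lambda)(a-\mu) = 0$; since $A$ has no zero divisors would be too strong to assume, instead argue: if $\lambda\ne\mu$ one builds a nontrivial idempotent from $(a-\mu)/(\lambda-\mu)$ (using that $a-\mu$ and $a-\lambda$ commute and multiply to $0$, and are nonzero because $a\notin\RR$), and if $\lambda = \mu$ then $(a-\lambda)^2 = 0$ gives a square-zero element, either way contradicting (iii). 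Hence $n(a) > 0$. For (iv) $\Rightarrow$ (i): take $a\notin\RR$; then $\RR[a] = \RR 1 + \RR a$ is two-dimensional and commutative, $a^2 = t(a)a - n(a)$, and setting $b = a - t(a)/2$ we get $b^2 = t(a)^2/4 - n(a) = -(n(a) - t(a)^2/4)$; the parenthesized quantity equals $n(b) > 0$ by (iv) applied to $b$ (note $b\ne 0$ since $a\notin\RR$), so $b^2 = -c^2$ for some $c > 0$, and $(c^{-1}b)^2 = -1$, giving $\RR[a]\cong\CC$.

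For the finite-dimensional equivalence with (v): the implication (v) $\Rightarrow$ (i) is easy — any $a = \lambda 1 + \sum\lambda_i e_i\notin\RR$ has $(a-\lambda)^2 = -\sum\lambda_i^2 < 0$ using $e_ie_j = -e_je_i$ to kill cross terms, so $\RR[a]\cong\CC$. For (i) $\Rightarrow$ (v), the idea is a Gram–Schmidt argument modeled on Lemma \ref{L1}: by Lemma \ref{L0} write $A = \RR\oplus U$; for $u\in U\setminus\{0\}$ we have $u^2\in\RR$ and, by (iv) applied after a shift, $u^2 < 0$, so $u$ is a nonzero scalar multiple of an element squaring to $-1$. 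Starting from any $e_1\in U$ with $e_1^2 = -1$, inductively, given anticommuting $e_1,\ldots,e_k$ with $e_i^2 = -1$, if these do not span $U$ pick $u\in U$ outside their span, set $v = u + \sum_i \tfrac12(ue_i + e_iu)e_i$, which satisfies $e_iv = -ve_i$ (the bilinear form $\langle u,v\rangle = -\tfrac12(uv+vu)$ from Lemma \ref{L0} makes this literal Gram–Schmidt), and rescale $v$ to $e_{k+1}$ with square $-1$. Since $\dim A = n < \infty$, this terminates with a basis $\{1,e_1,\ldots,e_{n-1}\}$ of the required form.

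The main obstacle I expect is the step (iii) $\Rightarrow$ (iv): one must extract from "no nontrivial idempotents or square-zero elements" a contradiction with $n(a)\le 0$, and the subtlety is that a priori $A$ need not be a division algebra, so one cannot simply say "$(a-\lambda)(a-\mu)=0$ implies $a\in\RR$." The resolution is to observe that $a-\lambda$ and $a-\mu$ lie in the commutative two-dimensional subalgebra $\RR[a]$, so standard one-variable manipulations are valid there: either normalize to produce an idempotent $\ne 0,1$ (the distinct-roots case) or a square-zero element $\ne 0$ (the repeated-root case). Verifying that the idempotent produced is genuinely nontrivial — i.e., not $0$ or $1$ — uses precisely that $a\notin\RR$, so it is worth spelling out. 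Everything else is routine once the quadratic structure and Lemma \ref{L0} are in hand.
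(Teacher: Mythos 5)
Your proposal is correct and follows essentially the same route as the paper: the same cycle of implications through the quadratic structure, the same shift $b=a-t(a)/2$ used to manufacture a nontrivial idempotent or square-zero element when $n(a)\le 0$ (your polynomial-factoring phrasing of (iii)$\Rightarrow$(iv) is just a restatement of that computation), and the same Gram--Schmidt argument from Lemmas \ref{L0} and \ref{L1} for (v). The only cosmetic differences are that you close the cycle with (iv)$\Rightarrow$(i) directly rather than via (ii), and you spell out (ii)$\Rightarrow$(iii) in more detail than the paper does.
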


\begin{proof}
It is easy to see that  (i)$\Longrightarrow$ (ii) and (ii)$\Longrightarrow$ (iii). Suppose $A$ is quadratic and $n(a) \le 0$ for some $0\ne a\in A$. Then $a\notin\RR$. Therefore also $b=a - \frac{t(a)}{2} \notin \RR$. Note that
$b^2 \ge 0$. If $b^2 =0$, then $A$ has a nontrivial nilpotent. If $b^2 > 0$, i.e., $b^2 = \alpha^2$ for some $0\ne \alpha\in\RR$, then $e=\frac{1}{2}(1-\alpha^{-1}b)$ is a nontrivial idempotent in $A$. 
Thus, (iii)$\Longrightarrow$ (iv). The proof of (iv)$\Longrightarrow$ (ii) is also straightforward. Therefore (ii)-(iv) are equivalent.  Now assume (ii)-(iv) and pick $a\in A\setminus{\RR}$. Then $b= a - \frac{t(a)}{2}$ satisfies $b^{2} \in\RR$. Just as in the argument above we see that 
  $b^{2}$ cannot be  $\ge 0$. Hence $b^{2} = -\alpha^2$ for some $\alpha\in\RR\setminus{\{0\}}$, and so $i = \alpha^{-1}b$ satisfies  $i^{2} = -1$. This yields (i).
  
  Finally, assume $2\le \dim A = n < \infty$. The implication (i)-(iv) $\Longrightarrow$ (v) follows from (the proof of) Lemma \ref{L1}. Assuming (v) and writing $a\in
   A$ as $a = \lambda_0 +\sum_{i=1}^{n-1}\lambda_i e_i$, we see that  $a^{2} - t(a)a + n(a)=0$ with $t(a)=2\lambda_0$ and $n(a) = \sum_{i=0}^{n-1}\lambda_i^2$. Thus, (iv) holds.
  \end{proof}

We can now list various examples of locally complex algebras.

\begin{example} \label{ex0}
A quadratic real division algebra is locally complex.
 \end{example}
 
\begin{example} \label{ex1}
Let $J_n$ be an $n$-dimensional real vector space, and let $\{1,e_1,\ldots,e_{n-1}\}$ be its basis. Define a multiplication in $J_n$ so that $1$ is of course the unity, and the others are multiplied according to $e_i e_j = -\delta_{ij}$. Then $J_n$ is a locally complex algebra and simultaneously  a Jordan algebra. Another way of representing $J_n$ is by identifying it with $\RR\times \RR^{n-1}$, and defining multiplication by $(\lambda,u) (\mu , v) = (\lambda\mu - \langle u,v\rangle, \lambda v + \mu u)$, where 
$\langle \,.\,,\,.\,\rangle$ denotes the standard inner product on $\RR^{n-1}$.
 \end{example}

\begin{example} \label{ex2}
A  real unital  algebra $A$ is said to be {\em nicely normed} if there exists a linear map $\ast:A\to A$ such that $a^{**} =a$, $(a b)^* = b^* a^*$ for all $a,b\in A$, and $a+a^*\in\RR$, $a a^* = a^* a > 0$ for all $0\ne a\in A$ (cf. \cite[p.\,154]{Baez}). These algebras form an important subclass of locally complex algebras. Namely,
every element $a$ in such an algebra $A$ satisfies $a^{2} - t(a)a + n(a)=0$ with $t(a)=a+a^*$ and $n(a) = a a^*$, so that $A$ is indeed locally complex. Note that $U=\{u\in A\setminus{\RR}\,|\,u^2 \in \RR\}\cup\{0\} = \{u\in A\,|\,u^* = -u\}$.

In particular, the Cayley-Dickson algebras $\AAA_n$ are  nicely normed, and hence locally complex.
 \end{example}

From Lemma \ref{Tlc} we can deduce the following characterization of finite dimensional nicely normed algebras.

\begin{corollary} \label{Cnn}
let $A$ be a real unital algebra. If $2\le \dim A = n < \infty$, then the following conditions are equivalent:
\begin{enumerate}
\item[(i)] $A$ is nicely normed;
\item[(ii)] $A$ has a basis $\{1,e_1,\ldots, e_{n-1}\}$ such that $e_i^{2} = -1$  for all $i$ and $e_i e_j =- e_j e_i \in {\rm span}\{e_1,\ldots,e_{n-1}\}$ for all $i\ne j$. 
\end{enumerate}
\end{corollary}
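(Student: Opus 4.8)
The plan is to treat the two implications separately, using Lemma~\ref{Tlc} together with the description of the subspace $U$ of Lemma~\ref{L0} recorded in Example~\ref{ex2}.

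For (i)$\Longrightarrow$(ii): a nicely normed algebra is locally complex, so since $2\le n<\infty$, Lemma~\ref{Tlc}(v) at once gives a basis $\{1,e_1,\dots,e_{n-1}\}$ with $e_i^2=-1$ for all $i$ and $e_ie_j=-e_je_i$ for $i\ne j$. It remains only to check that $e_ie_j\in{\rm span}\{e_1,\dots,e_{n-1}\}$ when $i\ne j$, i.e.\ that $e_ie_j$ has no component along $1$. For this I would observe that each $e_i$ lies in $U$ (indeed $e_i\notin\RR$ since $-1$ has no real square root, and $e_i^2=-1\in\RR$), and that by Example~\ref{ex2} one has $U=\{u\in A\,|\,u^*=-u\}$, so $e_i^*=-e_i$. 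Then $(e_ie_j)^*=e_j^*e_i^*=e_je_i=-e_ie_j$ forces $e_ie_j\in U$, and since $A=\RR\oplus U$ the set $\{e_1,\dots,e_{n-1}\}$ is a basis of $U$; hence $e_ie_j\in{\rm span}\{e_1,\dots,e_{n-1}\}$, as wanted.

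For (ii)$\Longrightarrow$(i): given such a basis, define $\ast$ to be the linear map with $1^*=1$ and $e_i^*=-e_i$ for every $i$; thus $a^*=\lambda_0-\sum_i\lambda_ie_i$ when $a=\lambda_0+\sum_i\lambda_ie_i$. The relations $a^{**}=a$ and $a+a^*=2\lambda_0\in\RR$ are immediate. A short computation using $e_i^2=-1$ and $e_ie_j+e_je_i=0$ yields $aa^*=a^*a=\lambda_0^2+\sum_i\lambda_i^2$, which is positive whenever $a\ne 0$. The only identity that needs genuine attention is $(ab)^*=b^*a^*$; since both sides are bilinear in $(a,b)$ it suffices to verify it on pairs of basis vectors. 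The cases involving $1$ are trivial; for $e_ie_j$ with $i\ne j$ one compares $(e_ie_j)^*=-e_ie_j$ (using the hypothesis $e_ie_j\in{\rm span}\{e_1,\dots,e_{n-1}\}$) with $e_j^*e_i^*=e_je_i=-e_ie_j$; and for $i=j$ both sides equal $-1$. Hence $\ast$ makes $A$ nicely normed.

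All the computations here are routine; the single load-bearing point is the identification, in Example~\ref{ex2}, of $U$ with the skew elements $\{u\,|\,u^*=-u\}$, which is exactly what pushes the products $e_ie_j$ back into ${\rm span}\{e_1,\dots,e_{n-1}\}$ in the forward direction. Beyond that I do not anticipate any real obstacle.
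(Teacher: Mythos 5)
Your proof is correct and follows essentially the same route as the paper: Lemma~\ref{Tlc}(v) plus the identification $U=\{u\,|\,u^*=-u\}$ from Example~\ref{ex2} for the forward direction, and the explicit definition $1^*=1$, $e_i^*=-e_i$ for the converse. You merely spell out a few details (that $\{e_1,\dots,e_{n-1}\}$ spans $U$, and the bilinearity check of $(ab)^*=b^*a^*$) that the paper leaves to the reader.
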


\begin{proof}
Assume (i). By Lemma  \ref{Tlc}\,(v) $A$ has a basis $\{1,e_1,\ldots, e_{n-1}\}$ that has all desired properties except that we do not know yet that $e_i e_j  \in {\rm span}\{e_1,\ldots,e_{n-1}\}$. In view of the  observation in Example \ref{ex2}  we have  ${\rm span}\{e_1,\ldots,e_{n-1}\} = U = \{u\in A\,|\,u^* = -u\}$. Therefore, if $i\ne j$, $(e_i e_j)^\ast = e_j^\ast e_i^\ast = e_j e_i = -e_i e_j$, and hence  $e_i e_j\in U$. Conversely, if (ii) holds, then we can define $\ast$ according to $1^* = 1$ and $e_{i}^* = -e_{i}$, and one easily checks that this makes $A$ a nicely normed algebra.
\end{proof}

If $A$ is a {\em commutative} finite dimensional locally complex algebra, then the $e_i$'s from (v) in  Lemma  \ref{Tlc} must satisfy $e_ie_j = 0$ if $i\ne j$. This can be interpreted as follows.

\begin{corollary}\label{Cjor}
Let  $A$ be a locally complex algebra with $2\le \dim A = n < \infty$. Then $A$ is commutative if and only if $A\cong J_n$.
\end{corollary}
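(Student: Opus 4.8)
The plan is to use the characterization of finite dimensional locally complex algebras from Lemma~\ref{Tlc}\,(v), together with the preliminary observation just before Corollary~\ref{Cjor} that commutativity forces $e_ie_j = 0$ for $i\ne j$, and then recognize the resulting multiplication table as that of $J_n$ from Example~\ref{ex1}.

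First I would prove the ``only if'' direction. Suppose $A$ is commutative with $2\le\dim A = n<\infty$. By Lemma~\ref{Tlc}\,(v), $A$ has a basis $\{1,e_1,\ldots,e_{n-1}\}$ with $e_i^2 = -1$ for all $i$ and $e_ie_j = -e_je_i$ for $i\ne j$. Commutativity gives $e_ie_j = e_je_i$, so combining with anticommutativity yields $2e_ie_j = 0$, hence $e_ie_j = 0$ whenever $i\ne j$. Thus the multiplication on $A$ is completely determined: $1$ is the unity, $e_i^2 = -1$, and $e_ie_j = 0$ for $i\ne j$, which is exactly $e_ie_j = -\delta_{ij}$. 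This is precisely the multiplication table defining $J_n$ in Example~\ref{ex1}, so the linear isomorphism sending the chosen basis of $A$ to the standard basis $\{1,e_1,\ldots,e_{n-1}\}$ of $J_n$ is an algebra isomorphism, i.e.\ $A\cong J_n$.

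For the ``if'' direction I would simply observe that $J_n$ is commutative: from its defining multiplication, $e_ie_j = -\delta_{ij} = e_je_i$, and products with $1$ obviously commute, so $J_n$ is commutative; hence any $A\cong J_n$ is commutative as well. One should also note in passing that this presupposes $J_n$ is genuinely locally complex of dimension $n$, which was already asserted in Example~\ref{ex1} and follows from Lemma~\ref{Tlc}\,(v) applied to its standard basis.

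I do not anticipate a real obstacle here; the statement is essentially a direct reading of Lemma~\ref{Tlc}\,(v) once the short observation on $e_ie_j = 0$ is made. The only point requiring a line of care is that knowing the products $e_ie_j$ for all $i,j$ (including $i=j$) together with the unital structure pins down the algebra up to isomorphism, so that matching multiplication tables really does give an algebra isomorphism rather than merely a linear one --- but this is immediate since an algebra on a fixed basis is determined by its structure constants.
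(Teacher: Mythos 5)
Your proof is correct and follows exactly the paper's route: the paper's (implicit) argument is precisely the remark preceding the corollary, namely that commutativity combined with the anticommutativity from Lemma~\ref{Tlc}\,(v) forces $e_ie_j=0$ for $i\ne j$, which identifies the multiplication table with that of $J_n$. Nothing further is needed.
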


Let $A$ be an alternative  real algebra. If  $A$ is an algebraic division algebra, then it is quadratic, and hence , as already mentioned, locally complex. Conversely,  if $A$ is locally complex, then by Lemma \ref{Tlc}\,(ii) for every $0\ne a\in A$ there exist $\lambda,\mu\in \RR$ such that $a(\lambda a + \mu) =1$. Since $A$ is alternative it follows that for every $y\in A$ the equation $ax=y$ has the solution $x= (\lambda a+ \mu)y$. Similarly one solves the equation $xa = y$. Therefore $A$ is an algebraic division algebra.  Accordingly, Frobenius' and Zorn's theorem can be equivalently stated as follows.

\begin{theorem}\label{TFZ}
{\bf (Frobenius' and Zorn's theorems)}   An  associative locally complex algebra is isomorphic to $\RR$, $\CC$, or $\HH$.  An  alternative  locally complex algebra is isomorphic  to $\RR$, $\CC$, $\HH$, or $\OO$.
\end{theorem}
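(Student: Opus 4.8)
The plan is to reduce the statement to Theorems \ref{TF} and \ref{TZ} by observing that, within the class of alternative real algebras, "locally complex" is just another name for "algebraic division algebra". The discussion immediately preceding the theorem already contains this equivalence, so the proof amounts to assembling those remarks in order.

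First I would dispose of the easy inclusion: if $A$ is an algebraic alternative real division algebra, then $A$ is power-associative (being alternative) and algebraic, hence quadratic by the observations in Section \ref{Prel}, and a quadratic real division algebra is locally complex by Example \ref{ex0}. Conversely, suppose $A$ is alternative and locally complex. By Lemma \ref{Tlc}\,(ii), for every $0\ne a\in A$ there are $\lambda,\mu\in\RR$ with $a(\lambda a+\mu)=1$, and since $a$ commutes with $\lambda a+\mu$ we also get $(\lambda a+\mu)a=1$. Given $y\in A$, Artin's theorem says the subalgebra generated by $a$ and $y$ is associative, so $a\big((\lambda a+\mu)y\big)=\big(a(\lambda a+\mu)\big)y=y$ and likewise $\big(y(\lambda a+\mu)\big)a=y\big((\lambda a+\mu)a\big)=y$; the same two-generator associativity shows that $ax=0$ or $xa=0$ forces $x=0$. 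Hence the left and right multiplications by any nonzero $a$ are bijective, i.e. $A$ is a division algebra, and it is algebraic because locally complex algebras are quadratic.

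With this equivalence established, the theorem is immediate. An associative locally complex algebra is an algebraic associative division algebra, so Theorem \ref{TF} yields $A\cong\RR$, $\CC$, or $\HH$. An alternative locally complex algebra is an algebraic alternative division algebra, so Theorem \ref{TZ} yields $A\cong\RR$, $\CC$, $\HH$, or $\OO$.

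I do not expect a real obstacle: every ingredient (Lemma \ref{Tlc}, Artin's theorem, the quadraticity of algebraic power-associative division algebras, and Theorems \ref{TF} and \ref{TZ}) is already available. The only point deserving a little care is to deduce bijectivity, not merely surjectivity, of the multiplication maps directly from Lemma \ref{Tlc}\,(ii), so that no finite-dimensionality hypothesis is tacitly used; that is the step I would write out in full.
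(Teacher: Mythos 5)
Your proposal is correct and follows essentially the same route as the paper: the discussion immediately preceding Theorem \ref{TFZ} establishes exactly this equivalence between locally complex and algebraic division algebras in the alternative setting via Lemma \ref{Tlc}\,(ii) and two-generator associativity, and then invokes Theorems \ref{TF} and \ref{TZ}. Your extra care in checking injectivity (not just surjectivity) of the multiplication maps is a small but worthwhile detail that the paper leaves implicit.
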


As already mentioned in the introduction, this version of Frobenius' and Zorn's theorems indicates the direction in which 
these theorems can be generalized. We shall deal with this in the next section. 

In the rest of this section 
we  will classify  locally complex algebras up to dimesion 4. 
Clearly, $\RR$ and $\CC$ are, up to an isomorphism, the only locally 
complex algebras of dimension $\le 2$. 

We fix some notation.
The members of $\RR \times \RR^2$
will be denoted by $(\lambda ,x) = (\lambda , x_1 , x_2)$ and the members of $\RR \times \RR^3$
by $(\lambda ,x) = (\lambda, x_1 , x_2 , x_3)$. For each (ordered) pair $x,y \in \RR^2$
we denote by $|x\ y|$ the $2\times 2$ determinant $ \left| \begin{array}{ccc} x_1 & y_1 \\
x_2 & y_2 \end{array} \right|$. The symbol $x\times y$ stands for the usual vector
product (cross product)
of $x,y \in \RR^3$, while $(x,y,z)$ denotes the scalar triple product
$(x,y,z) = \langle x\times y , z \rangle$, $x,y,z \in \RR^3$. 

Let $t,s$ be nonnegative real numbers. We denote by $A_{t,s}$ the 3-dimensional
algebra $A_{t,s}= \RR \times \RR^2$ with the multiplication given by
$$
(\lambda , x) \, (\mu , y) = (\lambda \mu - \langle x,y \rangle + t |x\ y| ,\lambda y
+ \mu x + s |x\ y|e_1 ),
$$
where $e_1 = (1,0)\in \RR^2$. It follows from Lemma \ref{Tlc}\,(v) that $A_{t,s}$ is
a locally complex algebra. We will show that each 3-dimensional locally complex
algebra $A$ is isomorphic to $A_{t,s}$ for some $(t,s)\in [0,\infty) \times [0, \infty)$
and that $A_{t,s}$ and $A_{t', s'}$ are not isomorphic whenever $(t,s)\not= (t' , s')$.
In short, we have the following classification theorem for 3-dimensional
locally complex algebras.

\begin{theorem}\label{bv1}
The map $(t,s) \mapsto A_{t,s}$, $t,s \ge 0$, induces a bijection between
$[0,\infty) \times [0, \infty)$ and isomorphism classes of 3-dimensional
locally complex algebras.
\end{theorem}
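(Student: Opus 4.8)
The plan is to split the statement into three independent tasks: (a) show every $3$-dimensional locally complex algebra $A$ is isomorphic to some $A_{t,s}$ with $t,s\ge 0$; (b) show $A_{t,s}\not\cong A_{t',s'}$ when $(t,s)\neq(t',s')$; and (c) note that surjectivity onto isomorphism classes is immediate once (a) is done. The main work is in (a), and the main obstacle there is pinning down the normalization that forces $t,s\ge 0$ and not merely $t,s\in\RR$.

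For (a) I would start from Lemma~\ref{Tlc}\,(v): $A$ has a basis $\{1,f_1,f_2\}$ with $f_i^2=-1$ and $f_1f_2=-f_2f_1$. Writing $A=\RR\oplus U$ with $U=\mathrm{span}\{f_1,f_2\}$ as in Lemma~\ref{L0}, the bilinear form $\langle u,v\rangle=-\tfrac12(uv+vu)$ is positive definite on $U$ (this uses that $A$ is locally complex, i.e.\ $n(a)>0$ for $a\neq0$, as in Lemma~\ref{Tlc}\,(iv)), so we may choose $f_1,f_2$ to be an \emph{orthonormal} basis of $U$; identifying $U\cong\RR^2$ via this basis, the symmetric part of the product on $U$ is exactly $-\langle x,y\rangle$. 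The antisymmetric part of $f_1f_2$ is some vector $(\tau,\sigma e_1')$ in $\RR\oplus U$; here is the key point, that the $U$-component can be taken to be a scalar multiple of $e_1$: any nonzero vector in $\RR^2$ can be rotated to a positive multiple of $e_1=(1,0)$ by an orthogonal change of the orthonormal basis $\{f_1,f_2\}$, and such a rotation of $\RR^2$ sends $|x\ y|$ to $\pm|x\ y|$ and preserves $\langle\cdot,\cdot\rangle$; composing with the reflection $f_2\mapsto -f_2$ (which flips the sign of $|x\ y|$) we can arrange that the coefficient $t$ of $|x\ y|$ in the $\RR$-component is $\ge0$ and simultaneously keep the $U$-component a $\ge0$ multiple of $e_1$ — one checks these two sign choices are compatible because both the scalar and the vector part of $f_1f_2$ flip together under $f_2\mapsto-f_2$ while $e_1$ is preserved, and under a genuine rotation the scalar part $t$ is unchanged. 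After this normalization the multiplication is literally the formula defining $A_{t,s}$, so $A\cong A_{t,s}$ with $t,s\ge0$.

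For (b), suppose $\varphi:A_{t,s}\to A_{t',s'}$ is an isomorphism. Then $\varphi$ fixes $1$ and maps $U=\{u\mid u^2\in\RR,\ u\notin\RR\}\cup\{0\}$ onto the corresponding subspace, and since $\varphi$ preserves $n$ (because $n(a)1 = -a^2$ for $a\in U$ and $n$ is determined by the algebra structure via $x^2-t(x)x+n(x)=0$), $\varphi|_U$ is an orthogonal map of $(\RR^2,\langle\cdot,\cdot\rangle)$. Now I would extract $t$ and $s$ as isomorphism invariants: for $u,v\in U$ the scalar part of $uv$ is $-\langle u,v\rangle + t\,|u\ v|$ and the $U$-part is $s\,|u\ v|\,e_1$; applying $\varphi$ and using $\det(\varphi|_U)=\pm1$ we get $t' = \pm t$ and, looking at the $U$-component, $s'\,|u\,v|\,\varphi(e_1) = \pm s\,|u\ v|\,e_1$, forcing $\varphi(e_1)=\pm e_1$ and $s'=\pm s$. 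The two $\pm$ signs are the same sign (both come from $\det\varphi|_U$), and if that sign is $-1$ then $t'=-t\le 0$ and $s'=-s\le 0$ with $t',s'\ge0$, forcing $t=t'=s=s'=0$; otherwise $t'=t$, $s'=s$ directly. Either way $(t,s)=(t',s')$. Finally (c): the map $(t,s)\mapsto A_{t,s}$ lands in isomorphism classes by construction, is onto by (a), and is injective by (b), hence induces the claimed bijection. I expect the delicate step to be the careful sign bookkeeping in the normalization in part (a), making sure the rotation and reflection freedoms suffice to push $t$ and $s$ simultaneously into $[0,\infty)$.
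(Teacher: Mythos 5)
Your overall strategy is the same as the paper's: use Lemma \ref{Tlc}\,(v) to write $A=\RR\times\RR^2$ with product $(\lambda,x)(\mu,y)=(\lambda\mu-\langle x,y\rangle,\lambda y+\mu x)+|x\ y|(t,z)$, normalize $(t,z)$ by an orthogonal change of basis of $U$, and then in the uniqueness part read off $t$ and $s$ as invariants from the fact that any isomorphism acts on $U$ by an orthogonal matrix $Q$, giving $t=t'\det Q$ and $sQe_1=s'(\det Q)e_1$. Your part (b) is essentially identical to the paper's argument and is correct.

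There is, however, a concrete sign error in the normalization in part (a) --- precisely at the step you flagged as delicate. Under a change of orthonormal basis by $Q\in O(2)$, the structure data transforms as $(t,z)\mapsto(t\det Q,\,(\det Q)\,Qz)$: the extra factor $\det Q$ comes from $|Qx\ Qy|=(\det Q)|x\ y|$, and the $Qz$ comes from re-expressing the vector part in the new basis. Your procedure is: first rotate so that $z=\|z\|e_1$, then, if $t<0$, apply the reflection $f_2\mapsto-f_2$. But that reflection fixes $e_1$, so it sends $(t,\|z\|e_1)$ to $(-t,\,-\|z\|e_1)$; you repair the sign of $t$ and simultaneously ruin the sign of $s$. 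Your stated justification --- that ``both the scalar and the vector part flip together while $e_1$ is preserved'' --- is true, but it is exactly the reason the two sign choices are \emph{not} compatible with this particular reflection, not the reason they are. The fix is immediate and is what the paper does: when $t<0$, choose a single orthogonal $Q$ with $\det Q=-1$ and $Qz=-\|z\|e_1$ (equivalently, in your language, reflect via $f_1\mapsto-f_1$ rather than $f_2\mapsto-f_2$, or follow your two moves by a rotation through $\pi$); then $(t,z)\mapsto(|t|,\,\|z\|e_1)$ and both normalizations hold at once. With that correction your proof is complete and coincides with the paper's.
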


\begin{proof}
We first show that each 3-dimensional locally complex
algebra $A$ is isomorphic to $A_{t,s}$ for some $(t,s)\in [0,\infty) \times [0, \infty)$.
It is a straightforward consequence of Lemma \ref{Tlc}\,(v) that $A$ is isomorphic
to $\RR \times \RR^2$ with the multiplication given by
$$
(\lambda , x) \, (\mu , y) = (\lambda \mu - \langle x,y \rangle ,\lambda y
+ \mu x ) + |x\ y| (t,z)
$$
for some $(t,z)\in \RR \times \RR^2$.  So, we may, and we will assume that
$A$ is this algebra. 
We have two possibilities; either $t\ge 0$, or $t<0$. Let us  consider only the second one; the case when $t\ge 0$ can be handled in a similar, but simpler way.
Set $s= \| z\|$.
There exists an orthogonal $2\times 2$ matrix $Q$
such that $Qz=-se_1$ and $\det Q = -1$. Observe that $|Qx \ Qy| = (\det Q) |x\ y| = - |x\ y|$
and $\langle Qx , Qy \rangle = \langle x,y \rangle$, $x,y \in \RR^2$.
We claim that the map $\varphi : A \to A_{|t|,s}$ given by $\varphi (\lambda , x) =
(\lambda , Qx)$, $(\lambda , x) \in \RR \times \RR^2$, is an isomorphism. Clearly, it
is linear and bijective. Moreover, we have
$$
\varphi( (\lambda, x)\, (\mu, y)) = \varphi ((\lambda \mu - \langle x,y \rangle 
+ t|x\ y|,\lambda y
+ \mu x + |x\ y|z ))
$$
$$
= (\lambda \mu - \langle x,y \rangle 
+ t|x\ y|,\lambda Qy
+ \mu Qx - s|x\ y|e_1 ).
$$
On the other hand,
$$
\varphi(\lambda, x)\, \varphi(\mu, y)= (\lambda , Qx)\, (\mu, Qy) 
$$
$$
= (\lambda \mu - \langle Qx,Qy \rangle + |t|\ |Qx\ Qy| ,\lambda Qy
+ \mu Qx + s |Qx\ Qy|e_1 ) 
$$ $$
= (\lambda \mu - \langle x,y \rangle + t |x\ y| ,\lambda Qy
+ \mu Qx - s |x\ y|e_1 ).
$$
Hence, $\varphi$ is an isomorphism. It remains to show that if
$A_{t,s}$ and $A_{t', s'}$ are isomorphic for some $(t,s),(t' , s')\in [0,\infty)\times
[0, \infty)$, then $(t,s)= (t' , s')$.

So, let $\varphi : A_{t,s} \to A_{t',s'}$ be an isomorphism. Then $\varphi$ is linear and
unital. In particular, $\varphi (\lambda, 0) = (\lambda, 0)$ for every $\lambda \in \RR$.
Furthermore, we have
$$
\{ (0,x) \in A_{t,s} \, | \, x\in \RR^2 \} = \{ u \in A_{t,s}\, | \, u^2 \in \RR  \ \,
{\rm and}\ \, u\not\in \RR  \}\cup\{0\}.
$$
It follows that
$$
\varphi (\lambda , x) = (\lambda , Qx )
$$
for some linear map $Q : \RR^2 \to \RR^2$. From
$$
(\lambda^2 - \| Qx\|^2 , 2\lambda Qx) = (\lambda , Qx)^2 = (\varphi (\lambda , x))^2 
$$
$$
= \varphi ((\lambda, x)^2) = \varphi (\lambda^2 - \| x\|^2 , 2 \lambda x) =
(\lambda^2 - \|x\|^2 , 2 \lambda Qx)
$$
we get that $\| Qx\|^2 = \| x\|^2$ for every $x\in \RR^2$. Thus, $Q$ is orthogonal. 
The equation
$$
\varphi ((\lambda, x)\, (\mu, y)) = \varphi(\lambda, x)\, \varphi (\mu , y)
$$
can be rewritten as
$$
(\lambda \mu - \langle x,y \rangle + t |x\ y| ,\lambda Qy
+ \mu Qx + s |x\ y| Qe_1 )
$$
$$
= (\lambda \mu - \langle x,y \rangle + t' (\det Q) \, |x\ y| ,\lambda Qy
+ \mu Qx + s' (\det Q) \,  |x\ y|e_1 ).
$$
We conclude that $t=t' \det Q$ and $sQe_1 = s' (\det Q) e_1$. Applying the fact that
$| \det Q| =1$ and $\| Qe_1 \| =
\| e_1 \| =1$ we get $| t| = |t'|$ and $|s | = |s'|$. As all $t,t',s,s'$ are nonnegative,
we have $t=t'$ and $s=s'$, as desired.
\end{proof}

It follows directly from Corollary \ref{Cnn} that $A_{t,s}$ is nicely normed if and only if $t=0$.
So, the above statement shows that there is a natural bijection between $[0, \infty)$ and 
isomorphism classes of 3-dimensional nicely normed algebras.

The next result owes a lot to the paper \cite{Die}
classifying 4-dimensional real quadratic division algebras.
Our approach 
covers a more general class of real algebras. It is self-contained and completely elementary
using just simple linear algebra tools. 

We identify linear maps on $\RR^3$ with $3\times 3$ real matrices. Let $M_3$ denote the set
of all $3\times 3$ real matrices. For $(T,u),(T',u') \in M_3 \times \RR^3$ we write $(T,u)
\sim (T',u')$ if and only if
there exists an orthogonal $3\times 3$ matrix $Q$ such that $T' = (\det Q) QTQ^T$ and $u'=(\det Q)
Q u$.  It is clear that $\sim$ is an equivalence relation on $M_3 \times \RR^3$.
The set of equivalence classes will be denoted by $(M_3 \times \RR^3)/\sim$. 

For $T\in M_3$ and $u\in \RR^3$
we denote by $A_{T,u}$ the 4-dimensional
algebra $A_{T,u}= \RR \times \RR^3$ with the multiplication given by
$$
(\lambda , x) \, (\mu , y) = (\lambda \mu - \langle x,y \rangle + (x,y,u) ,\lambda y
+ \mu x + T(x\times y) ).
$$
As in the 3-dimensional case one can easily verify that $A_{T,u}$ is
a locally complex algebra. We will show that each 4-dimensional locally complex
algebra $A$ is isomorphic to $A_{T,u}$ for some $(T,u)\in M_3 \times \RR^3$
and that $A_{T,u}$ and $A_{T', u'}$ are isomorphic if and only if $(T,u) \sim (T' , u')$.
In other words, we will prove the following.

\begin{theorem}\label{bv2}
The map $(T,u) \mapsto A_{T,u}$, $T\in M_3$, $u\in \RR^3$, induces a bijection between
$(M_3 \times \RR^3)/\sim$ and isomorphism classes of 4-dimensional
locally complex algebras.
\end{theorem}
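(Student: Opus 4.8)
The plan is to mimic the structure of the 3-dimensional case (Theorem \ref{bv1}), splitting the argument into two halves: first, that every 4-dimensional locally complex algebra is isomorphic to some $A_{T,u}$, and second, that $A_{T,u}\cong A_{T',u'}$ exactly when $(T,u)\sim(T',u')$. For the first half, I would start from Lemma \ref{Tlc}\,(v): a 4-dimensional locally complex algebra $A$ has a basis $\{1,e_1,e_2,e_3\}$ with $e_i^2=-1$ and $e_ie_j=-e_je_i$ for $i\ne j$. Identifying $A$ with $\RR\times\RR^3$ via $\lambda+\sum x_ie_i \leftrightarrow (\lambda,x)$, the relations $e_i^2=-1$ force the scalar part of $(0,x)(0,y)$ to be $-\langle x,y\rangle$ on the diagonal, and bilinearity plus antisymmetry of the off-diagonal products shows the scalar part is $-\langle x,y\rangle + \phi(x,y)$ where $\phi$ is an alternating bilinear form on $\RR^3$, hence $\phi(x,y)=(x,y,u)$ for a unique $u\in\RR^3$. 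Similarly the vector part of $(0,x)(0,y)$ is an $\RR^3$-valued alternating bilinear form in $x,y$ (the symmetric part being killed by $e_ie_j=-e_je_i$ together with $e_i^2=-1$ contributing only to the scalar slot), so it equals $T(x\times y)$ for a unique linear map $T\in M_3$; the terms $\lambda y+\mu x$ come from multiplication by the unit. This gives $A\cong A_{T,u}$.

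For the second half, let $\varphi:A_{T,u}\to A_{T',u'}$ be an isomorphism. As in the proof of Theorem \ref{bv1}, $\varphi$ is unital, fixes $\RR\times 0$ pointwise, and preserves the set $\{u\in A : u^2\in\RR,\ u\notin\RR\}\cup\{0\}$, which here is exactly $0\times\RR^3$; hence $\varphi(\lambda,x)=(\lambda,Qx)$ for some linear $Q:\RR^3\to\RR^3$. Squaring $(\lambda,x)$ and comparing with the square of $(\lambda,Qx)$ forces $\|Qx\|=\|x\|$, so $Q$ is orthogonal. Now expand both sides of $\varphi((\lambda,x)(\mu,y))=\varphi(\lambda,x)\varphi(\mu,y)$: matching scalar parts gives $(x,y,u)=(Qx,Qy,u')$ for all $x,y$, and using $(Qx,Qy,u')=(\det Q)\,(x,y,Q^Tu')$ (the standard identity for the triple product under an orthogonal map) yields $u=(\det Q)Q^Tu'$, i.e. $u'=(\det Q)Qu$. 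Matching vector parts gives $QT(x\times y)=T'(Qx\times Qy)=T'(\det Q)Q(x\times y)$ for all $x,y$; since $x\times y$ ranges over all of $\RR^3$, this is $QT=(\det Q)T'Q$, i.e. $T'=(\det Q)QTQ^T$. Thus $(T,u)\sim(T',u')$. Conversely, given such a $Q$, running these computations backwards shows $\varphi(\lambda,x)=(\lambda,Qx)$ is an isomorphism, so $(T,u)\sim(T',u')$ implies $A_{T,u}\cong A_{T',u'}$.

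The main obstacle is bookkeeping rather than conceptual: verifying carefully that every alternating $\RR^3$-valued bilinear map on $\RR^3$ factors through the cross product as $x\times y\mapsto T(x\times y)$ (this uses that $\Lambda^2\RR^3\cong\RR^3$ via the cross product, so an alternating bilinear map is determined by a linear map out of $\Lambda^2\RR^3$), and keeping the sign conventions for $\det Q$ consistent when transporting the triple product and cross product through the orthogonal change of coordinates. The identity $Qx\times Qy=(\det Q)\,Q(x\times y)$ for orthogonal $Q$ is the crucial ingredient that makes the factor $(\det Q)$ appear in the definition of $\sim$; everything else is routine linear algebra as in Theorem \ref{bv1}. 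One should also check the well-definedness claim that $A_{T,u}$ is always locally complex, but this was already noted to follow from Lemma \ref{Tlc}\,(v) since the displayed basis manifestly satisfies the required relations.
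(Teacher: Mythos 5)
Your proposal is correct and follows essentially the same route as the paper: both halves (realizing every 4-dimensional locally complex algebra as some $A_{T,u}$ via Lemma \ref{Tlc}\,(v), and then showing isomorphisms must have the form $(\lambda,x)\mapsto(\lambda,Qx)$ with $Q$ orthogonal, extracting $u'=(\det Q)Qu$ and $T'=(\det Q)QTQ^T$ from the identity $Qx\times Qy=(\det Q)\,Q(x\times y)$) match the paper's argument. The only cosmetic difference is that you package the non-unital part of the product as an alternating bilinear map factoring through $\Lambda^2\RR^3\cong\RR^3$, whereas the paper writes it as a linear map $S$ applied to the vector of $2\times 2$ minors and then identifies that vector with $x\times y$ up to permutation and signs; these are the same observation.
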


\begin{proof} We will first show that each $4$-dimensional locally complex algebra $A$ is isomorphic to
$A_{T,u}$ for some $(T,u)\in M_3 \times \RR^3$. 
It is a straightforward consequence of Lemma \ref{Tlc}\,(v) that $A$ is isomorphic
to $\RR \times \RR^3$ with the multiplication given by
$$
(\lambda , x) \, (\mu , y) = (\lambda \mu - \langle x,y \rangle ,\lambda y
+ \mu x ) + S(x_1 y_2 - x_2 y_1 , x_1 y_3 - x_3 y_1 , x_2 y_3 - x_3 y_2 )
$$
for some linear map $S : \RR^3 \to \RR \times \RR^3$. Observe that 
$S : \RR^3 \to \RR \times \RR^3$ can be decomposed into a direct sum of a linear functional on $\RR^3$
and an endomorphism on $\RR^3$. Recall that every linear functional on $\RR^3$ can be represented in a unique way
as an inner product with a fixed vector in $\RR^3$. Finally, observe that the coordinates of the vector
$(x_1 y_2 - x_2 y_1 , x_1 y_3 - x_3 y_1 , x_2 y_3 - x_3 y_2 )$ are up to a permutation and a multiplication
by $\pm 1$ the coordinates of the vector product $x \times y$. Thus, $A$ is 
isomorphic
to $\RR \times \RR^3$ with the multiplication given by
$$
(\lambda , x) \, (\mu , y) = (\lambda \mu - \langle x,y \rangle + (x,y,u) ,\lambda y
+ \mu x + T(x \times y)) 
$$
for some $u\in \RR^3$ and some endomorphism $T$ of $\RR^3$. Hence, $A$ is isomorphic to $A_{T,u}$, as
desired.

Assume now that
$A_{T,u}$ and $A_{T', u'}$ are isomorphic for some $(T,u),(T' , u')\in M_3\times
\RR^3$. We have to show that $(T,u)\sim (T' , u')$.

So, let $\varphi : A_{T,u} \to A_{T',u'}$ be an isomorphism. Exactly in the same way as
in the 3-dimensional case we show that
$$
\varphi (\lambda , x) = (\lambda , Qx )
$$
for some orthogonal $3\times 3$ matrix $Q$. 
The equation
$$
\varphi ((\lambda, x)\, (\mu, y)) = \varphi(\lambda, x)\, \varphi (\mu , y)
$$
can be rewritten as
$$
(\lambda \mu - \langle x,y \rangle + (x,y,u) ,\lambda Qy
+ \mu Qx + QT(x\times y) )
$$
$$
= (\lambda \mu - \langle x,y \rangle + (Qx,Qy,u') ,\lambda Qy
+ \mu Qx + T' (Qx \times Qy) ).
$$
We conclude that 
$$
(x,y,u)= (Qx, Qy , u') 
$$
and
$$
QT(x\times y) = T' (Qx \times Qy)
$$
for all $x,y \in \RR^3$. As $Q$ is orthogonal we have $Q(x\times y) = (\det Q ) (Qx
\times Qy)$, and consequently,
$$
(x,y,u) = (\det Q)\, (x,y, Q^T u') \ \ \ {\rm and}\ \ \  QT(x\times y) = (\det Q)\,
T'Q (x\times y),\ \ \ x,y \in \RR^3.
$$
It follows that $u' = (\det Q) Q u$ and $T' = (\det Q) QTQ^T$, as desired.

Finally, if $(T,u) \sim (T' ,u')$ for some $T,T' \in M_3$ and $u,u' \in \RR^3$ then 
there exists an orthogonal $3\times 3$ matrix $Q$ such that $T' = (\det Q) QTQ^T$ and $u'=(\det Q)
Q u$. It is then straightforward to check that the map $\varphi : A_{T,u} \to A_{T' , u'}$ 
defined by $\varphi (\lambda, x) = (\lambda , Qx)$, $(\lambda, x) \in A_{T,u}$,
is an
isomorphism.
\end{proof}

It is rather easy to verify that $A_{T,u}$ is nicely normed if and only if $u=0$. We will next show that
$A_{T,u}$ is a division algebra if and only if $\langle Tx , x \rangle \not=0$ for each nonzero $x\in \RR^3$
(that is, the quadratic form $q(x) = \langle Tx , x \rangle$ is either positive definite, or negative definite).
Indeed, assume first that $A_{T,u}$ is not a division algebra. Then 
$$
(\lambda \mu - \langle x,y \rangle + (x,y,u) ,\lambda y
+ \mu x + T(x\times y) ) = 0
$$
for some nonzero $(\lambda , x) , (\mu , y) \in A_{T,u}$. In particular,
$$
T(x\times y) = -\lambda y -\mu x.
$$
Set $z = x \times y$. We have $z\not=0$, since otherwise $x$ and $y$ are linearly dependent and therefore
\begin{itemize}
\item either $\lambda = 0$ and then $\langle x,y \rangle = 0$ and $\mu x=0$ which further yields that  
$(\lambda , x) = 0$ or $(\mu , y ) = 0$, a contradiction; or
\item $\mu=0$ which yields a contradiction in exactly the same way; or
\item $\lambda\not=0$ and $\mu\not=0$ and
then $y = -\mu \lambda^{-1} x$ and $\lambda \mu = \langle x, y \rangle$ yield
$0< \lambda^2 = -\langle x , x \rangle \le 0$, a contradiction.
\end{itemize}
Hence, $z\not=0$ and because $z$ is orthogonal to both $x$ and $y$ we have $\langle Tz , z \rangle =0$.

To prove the other direction we assume that there exists $z\in \RR^3$ with $\| z \| =1$ and $\langle Tz,
z \rangle =0$. Then $Tz = -tw$ for some real number $t$ and some 
$w\in \RR^3$ with $w\perp z$ and $\| w \| =1$. There is a unique $v\in \RR^3$ such that $z = w \times v$ and $v\perp w$. 
Set $s=-(w, v, u)$.
Then
$(0,w)$ and $(t, v- sw)$ are nonzero elements of $A_{T,u}$ whose product is equal to zero. Hence,
$A_{T,u}$ is not a division algebra, as desired.

Following Dieterich's idea \cite{Die} we will now disscuss  a geometric interpretation of the classification of 4-dimensional
locally complex algebras. 
Let us start with a simple observation concerning $3\times3$ skew-symmetric matrices.
If $x,y \in \RR^3$ are any two vectors such that $x\times y = (c_1 , c_2 , c_3)$, then
$$
R= \left[ \begin{array}{ccc} 0 & c_3 & -c_2 \\ -c_3 & 0 & c_1 \\
c_2 & -c_1 & 0 \end{array} \right] = xy^T - yx^T,
$$
where $x$ and $y$ are represented as $3\times 1$ matrices. If $Q$ is any orthogonal matrix, then $QRQ^T =
(Qx)(Qy)^T - (Qy)(Qx)^T$. As $Qx \times Qy = (\det Q)\, Q(x\times y)$, we have
$$
Q \left[ \begin{array}{ccc} 0 & c_3 & -c_2 \\ -c_3 & 0 & c_1 \\
c_2 & -c_1 & 0 \end{array} \right] Q^T = \left[ \begin{array}{ccc} 0 & d_3 & -d_2 \\ -d_3 & 0 & d_1 \\
d_2 & -d_1 & 0 \end{array} \right],
$$
where
$$
\left[ \begin{array}{ccc} d_1 \\ d_2 \\
d_3 \end{array} \right] = (\det Q) \, Q 
\left[ \begin{array}{ccc} c_1 \\ c_2 \\
c_3 \end{array} \right].
$$
If we choose $Q\in SO(3)$ such that 
$$
\left[ \begin{array}{ccc} 0 \\ 0 \\ \sqrt{c_{1}^2 + c_{2}^2 + c_{3}^2} 
\end{array} \right] =  Q 
\left[ \begin{array}{ccc} c_1 \\ c_2 \\
c_3 \end{array} \right],
$$
then
$$
QRQ^T = \left[ \begin{array}{ccc} 0 & d & 0 \\ -d & 0 & 0 \\
0 & 0 & 0 \end{array} \right],
$$
where $d= \sqrt{c_{1}^2 + c_{2}^2 + c_{3}^2}$. In particular, $d = \| R \|$.

Any $3\times 3$ matrix $T$ can be uniquely decomposed into its symmetric and skew-symmetric part, $T=P+R$,
$P= (1/2)(T + T^T)$, $R = (1/2)(T - T^T)$. If $T' = (\det Q) QTQ^T$ and $T' = P' +R'$ with $P'$ symmetric and $R'$ skew-symmetric,
then $P' = (\det Q) QPQ^T$ and $R' = (\det Q) QRQ^T$. We will say that $A_{T,u}$ is of rank 3,2,1,0, respectively,
if the symmetric part $P$ of $T$ is of rank 3,2,1,0, respectively. By the previous remark, two isomorphic
algebras $A_{T,u}$ have the same rank.

Let us start with algebras $A_{T,u}$ of rank 3. We have two possibilities: either all eigenvalues of $P= T+T^T$
have the same sign, or $P$ has both positive and negative eigenvalues. In the first case we will say that
$A_{T,u}$ is an ellipsoid locally complex algebra of dimension 4, while in the second case we call
$A_{T,u}$ a hyperboloid locally complex algebra of dimension 4. As we are interested in isomorphism classes we can
use the fact that $A_{T,u}$ is isomorphic to $A_{-T,u}$ to restrict our attention to the case when all the eigenvalues
of $P$ are positive (the ellipsoid case) or to the case when two eigenvalues of $P$ are positive and one is negative
(the hyperboloid case). Once we have done this restriction two algebras $A_{T,u}$ and $A_{T' , u'}$ of the above types
are isomorphic if and only if $T' = QTQ^T$ and $u' = Qu$ for some $Q\in SO(3)$.

To consider isomorphism classes of hyperboloid locally complex algebras of dimension 4
(a 4-dimensional locally complex algebra is hyperboloid if it is isomorphic to some
hyperboloid algebra $A_{T,u}$)
we set $\tau = \{ \delta \in \RR^3 \, | \, \delta_1 \ge \delta_2 > 0 > \delta_3 \}$ and
$\kappa = \tau \times \RR^3 \times \RR^3$. The elements of $\kappa$ will be called configurations.
Each configuration consists of a hyperboloid $H_\delta = \{ x\in \RR^3 \, | \, \langle \Delta_\delta x,x \rangle =1\}$ 
(a hyperboloid in principal axis
form) and a pair of points. Here, $\Delta_\delta$ is the diagonal matrix with the diagonal entries: $\delta_1 ,
\delta_2 , \delta_3$. The symmetry group of the hyperboloid $H_\delta$ is defined to be $G_\delta = \{ Q \in SO(3) \, | \, 
Q\Delta_\delta Q^T = \Delta_\delta \}$ (the requirement that $\det Q = 1$ tells that we allow only symmetries that
preserve the orientation). Note that this symmetry group consists of 4 elements whenever $\delta_1 > \delta_2$.
Namely, in this case the symmetry group consists of the identity and all diagonal matrices with two eigenvalues -1
and one eigenvalue 1. The symmetry group is infinite if and only if the hyperboloid $H_\delta$ is circular, that is,
$\delta_1 = \delta_2$. Two configurations $(\delta , u, c)$ and $(\delta' , u' ,c')$ are said to be equivalent,
$(\delta , u, c) \equiv (\delta' , u' ,c')$, if
and only if their hyperboloids coincide and their pairs of points lie in the same orbit under the operation of the symmetry
group of the hyperboloid, that is, if and only if
$\delta = \delta'$ and $(u' , c') = (Qu, Qc)$ for some $Q\in G_\delta$. We denote by 
$\kappa/ \equiv$ the set of equivalence classes of $\kappa$. We have a natural bijection between $\kappa/\equiv$
and the set of equivalence classes of hyperboloid locally complex algebras of dimension 4. Indeed, 
the bijection is induced by the map
$$
(\delta , u, c) \mapsto A_{\Delta_\delta + R_c,u}
$$
where
$$
\Delta_\delta + R_c = \left[ \begin{array}{ccc} \delta_1 & c_3 & -c_2 \\ -c_3 & \delta_2 & c_1 \\
c_2 & -c_1 & \delta_3 \end{array} \right].
$$
Clearly, $A_{\Delta_\delta + R_c,u}$ is a hyperboloid locally complex algebra.
We have to show that each hyperboloid algebra $A_{T, v}$ is isomorphic to some $A_{\Delta_\delta + R_c,u}$ and that
$A_{\Delta_\delta + R_c,u}$ and $A_{\Delta_{\delta'} + R_{c'} ,u'}$ are isomorphic if and only if
$(\delta , u, c) \equiv (\delta' , u' ,c')$. The second statement is trivial. To verify the first one
we write $T=P+R$ with $P$ symmetric with two positive eigenvalues and $R$ skew-symmetric. Then there exists
$Q\in SO(3)$ such that $QPQ^T = \Delta_\delta$ for some $\delta \in \tau$. We have $QRQ^T = R_c$ for some
$c\in \RR^3$. Set $u=Qv$ to complete the proof.

In a similar fashion we can consider isomorphism classes of ellipsoid locally complex algebras of dimension 4. Note that
a locally complex algebra $A_{T,u}$ is a division algebra if and only if it is an ellipsoid algebra. As above we can
consider configurations which consist of an ellipsoid in principal axis form and a pair of points. To each such configuration 
there corresponds a 4-dimensional real division algebra and this correspondence induces a bijection between the equivalence
classes of configurations (the equivalence being defined via the symmetry group of the ellipsoid) and the isomorphism 
classes of 4-dimensional real quadratic division algebras. We omit the details that can be found in \cite{Die}. It is clear that
locally complex algebras of rank 2 are either elliptic cylinder algebras or hyperbolic cylinder algebras. We leave the
details to the reader. In the same way one can classify also isomorphism classes of
locally complex algebras of rank 1. Let us conclude with the detailed disscussion on 4-dimensional locally complex 
algebras of rank 0. By $e_3$ we denote $e_3 = (0,0,1) \in \RR^3$.
We define an equivalence relation on the set $[0, \infty) \times \RR^3$ as follows:
$(d,u), (d',u')\in [0, \infty) \times \RR^3$ are said to be equivalent, $(d,u) \equiv (d', u')$, if either
\begin{itemize}
\item $d=d' =0$ and $\| u \| = \| u' \|$; or 
\item $d=d' >0$, $\| u \| = \| u '\|$, and
$\langle u, e_3 \rangle  =  \langle u' , e_3 \rangle $.
\end{itemize}
Note that the equivalence class of $(d,u)\in [0 , \infty) \times \RR^3$ with $d>0$ contains infinitely
many elements if $u$ and $e_3$ are linearly independent, and is a singleton when $u$ is a  scalar multiple of $e_3$.
There is a natural bijection between the
isomorphism classes of 4-dimensional locally complex algebras of rank 0 and the set $([0, \infty) \times \RR^3)/\equiv$.
The bijection is induced by the map from $[0, \infty) \times \RR^3$ which maps the pair $(d, u)$, $d\ge 0$, $u\in \RR^3$,
into $A_{T_d , u}$ with
$$
T_d= \left[ \begin{array}{ccc} 0 & d & 0 \\ -d & 0 & 0 \\
0 & 0 & 0 \end{array} \right].
$$
Obviously, $A_{T_d , u}$ is a locally complex algebra of rank 0 and one can easily verify that each
4-dimensional locally complex algebra of rank 0 is isomorphic to some $A_{T_d , u}$. It remains to show that
$A_{T_d , u}$ and $A_{T_{d'}, u'}$ are isomorphic if and only if $(d,u) \equiv (d' , u')$. So, assume that
$A_{T_d , u}$ and $A_{T_{d'}, u'}$ are isomorphic for some $(d,u), (d' , u') \in [0, \infty) \times \RR^3$.
Then there exists an orthogonal matrix $Q$ such that $T_{d'} = (\det Q) QT_d Q^T $ and  
$u' = (\det Q) Qu$. In particular, $d' = \| T_{d'} \| = \| T_d \| = d$ and $\| u' \| = \| u \|$. If $d=0$, then
$d' = 0$, and hence, $(d,u) \equiv (d' , u')$ in this special case. Therefore we may assume that $d=d' >0$. From
$T_{d'} = (\det Q) QT_d Q^T$ we conclude that $Qe_3 = (\det Q) e_3$. Consequently,
$$
\langle u' , e_3 \rangle  = \langle (\det Q) Qu, (\det Q) Qe_3 \rangle  =  \langle u, e_3 \rangle .
$$
To prove the converse we assume that $(d,u) \equiv (d' , u')$. We have one of the two possibilities
and we will consider just the second one. So, assume that
$d=d' >0$, $\| u \| = \| u '\|$, and
$\langle u, e_3 \rangle  =  \langle u' , e_3 \rangle $.
Then there exists an orthogonal matrix $Q$ such that $Qe_3 = e_3$ and $Qu = u'$. 
The orthogonal complement of $e_3$ and $u$ is one-dimensional (if $e_3$ and $u$ are linearly independent) or
two-dimensional (if $e_3$ and $u$ are linearly dependent). We have a freedom to choose the action of $Q$ on
the orthogonal complement of $e_3$ and $u$ (of course, up to the requirement that $Q$ is an orthogonal matrix).
In particular, we can choose $Q$ in such a way that $\det Q =1$.
 It follows
that $T_{d'} =  QT_d Q^T $ and  
$u' =  Qu$, as desired.

\section{Super-alternative locally complex algebras} \label{Secsuper}

Let us call an algebra $A$ a {\em super-alternative algebra} if it is $\mathbb Z_2$-graded, $A = A_0\oplus A_1$, and the alternativity conditions \eqref{alt} hold 
for all its   homogeneous elements. Equivalently,
\begin{equation}\label{alte}
u^2 x = u(ux),\,\, xu^2 = (xu)u \quad\mbox{for all $u\in A_i$, $i\in \ZZ_2$, $x\in A$,} 
\end{equation}
or, in the linearized form,
\begin{eqnarray}\nonumber
&&(uv + vu) x = u(vx) + v(ux),\\  \label{alte0}&&x(uv+vu) = (xu)v + (xv)u \quad\mbox{for all $u,v\in A_i$,  $i\in \ZZ_2$ , $x\in A$.} 
\end{eqnarray}
   
 The notion of a super-alternative algebra  should not be confused with the notion of an {\em alternative superalgebra}. The latter is defined through the alternativity of the Grassmann envelope of $A$. It turns out that nontrivial examples of alternative superalgebras exist only very exceptionally: prime alternative superalgebras of characteristic different from $2$ and $3$ are either associative or their odd part is zero \cite{SZ}. As we shall see, super-alternative algebras are more easy to find.

 Throughout this section {\em $A$ will be a super-alternative locally complex algebra}.
Our goal is to to classify all such algebras $A$. Obvious examples are   $\RR$, $\CC$, $\HH$, and $\OO$, as we can always take  the trivial $\ZZ_2$-grading (the odd part is $0$). Further,
one can check by a straigtforward calculation that if $\AAA_{n-1}$ is an alternative algebra, then every  $u\in (\AAA_{n-1}\times 0)\cup (0\times \AAA_{n-1})$ satisfies 
\eqref{alte} for every  $x\in \AAA_n$. Therefore, $\CC$, $\HH$, $\OO$, and $\SSS$ are super-alternative algebras with respect to the natural $\ZZ_2$-grading  mentioned in Section \ref{Prel}. Of course, the important information for us in this context is that $\SSS$ is also a super-alternative locally complex algebra. As we shall see, besides  $\RR$, $\CC$, $\HH$, $\OO$ and $\SSS$  only two more algebras must be added to the complete list of such algebras.

We continue by recording several simple but useful observations. First,  the following special case of \eqref{alte0} will be often used: 
 
\smallskip
(a)  If $u,v\in A_i$, $i\in\ZZ_2$, are such that $uv+vu=0$, then $u(vx) =- v(ux)$ and $(xu)v=-(xv)u$ for all $x\in A$.
 \smallskip

  If $v\in A_1$, then $v^2 \in A_0$; on the other hand, $v^2 = \lambda v +\mu$ for some $\lambda,\mu\in \RR$. Since $v\notin A_0$, we must have $\lambda =0$ and hence $v^2 =\mu\in\RR$. Since $A$ is locally complex, it follows that $\mu< 0$ if $v\ne 0$. Thus, we have
\smallskip

(b) If $0\ne v\in A_1$, then there is $\alpha\in \RR$ such that $(\alpha  v)^2 = -1$. 

 \smallskip
 Let $u\in A_0$ and $v\in A_1$ be such that $u^2= v^2=-1$. 
Using Lemma \ref{L0} we have $uv + vu \in\RR\cap A_1 =0$. Therefore $v(uv) = - v(vu) = -v^2 u = u$. Next, $(uv)v = uv^2 = -u$. Similarly we see that $(uv)u = - u(uv) = v$. Finally, using
(a) we get $(uv)(uv) = -(uv)(vu) = v((uv)u) = v^2 = -1$.
We have proved: 
 
 \smallskip
 
 (c) If $u\in A_0$ and $v\in A_1$ are such that $u^2= v^2=-1$, then $uv = -vu$, $v(uv) = - (uv)v = u$, $(uv)u = - u(uv) = v$, and $(uv)^2 = -1$.
 \smallskip

Let $u$ be a  homogeneous element and  suppose that $ux=0$ for some $x\in A$. If $u\ne 0$, then by multiplying this identity from the left by $u-t(u)$ it follows from \eqref{alte} that $n(u)x=0$, and hence $x=0$. Similarly,  $xu=0$  implies $x=0$ if $u\ne 0$. Thus:

\smallskip
(d) Homogeneous elements are not zero divisors.
 \smallskip

It is clear that our conditions on $A$ imply that $A_0$ is a locally complex alternative algebra. Theorem \ref{TFZ} therefore tells us that $A_0$ is isomorphic  to $\RR$, $\CC$, $\HH$, or $\OO$.
If $A_1=0$, then we get the desired conclusion that $A=A_0$ is one of the algebras from the expected list. 
 Without loss of generality we  may therefore assume that $A_1\ne 0$. Given $0\ne u\in A_1$, it follows from (d) that $x\mapsto ux$ is an injective linear map from $A_0$ into $A_1$; the same rule defines an injective linear map from $A_1$ into $A_0$. We may therefore conclude that 

\smallskip
(e)  $\dim A_0 = \dim A_1$.
 \smallskip

In particular we now know that a  super-alternative locally complex algebra must be finite dimensional. Moreover, its dimension can be only $1$, $2$, $4$, $8$, or $16$.

We shall now consider separately each of the four possibilities concerning $A_0$.

\begin{lemma} \label{case1}
If $A_0 \cong \RR$, then $A\cong\CC$. 
\end{lemma}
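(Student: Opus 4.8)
The plan is to observe that the hypothesis $A_0\cong\RR$ pins down the dimension of $A$, after which local complexity finishes the job with no real work. First I would invoke observation (e), which gives $\dim A_1=\dim A_0=1$, so that $\dim A=2$. Since we are in the case $A_1\ne 0$, pick a nonzero $v\in A_1$; by observation (b) there is $\alpha\in\RR$ with $(\alpha v)^2=-1$, so after replacing $v$ by $\alpha v$ we may assume $v\in A_1$ and $v^2=-1$. Because $1\in A_0$ and $0\ne v\in A_1$, the set $\{1,v\}$ is linearly independent and hence a basis of the two-dimensional space $A$. The bilinear multiplication is then completely determined by $1$ being the unity together with $v\cdot v=-1$, which is exactly the multiplication table of $\CC$; thus $\lambda+\mu v\mapsto \lambda+\mu i$ is an algebra isomorphism $A\to\CC$.

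Equivalently, one could simply quote the remark at the start of Section \ref{loccom} that $\RR$ and $\CC$ are, up to isomorphism, the only locally complex algebras of dimension at most $2$, or apply Lemma \ref{Tlc}(v) with $n=2$ to obtain the distinguished basis $\{1,e_1\}$ with $e_1^2=-1$. I do not expect any genuine obstacle here: this is a base case of the classification, and its only inputs are the already-established equality $\dim A_0=\dim A_1$ from (e) and the elementary fact that a two-dimensional locally complex algebra must be $\CC$. The substantive part of the argument will only start once $A_0$ is allowed to be $\HH$ or $\OO$.
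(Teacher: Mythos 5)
Your proof is correct and follows exactly the paper's argument: the paper also cites observation (b) to produce an element of $A_1$ with square $-1$ and observation (e) to conclude $\dim A=2$, hence $A\cong\CC$. You have merely spelled out the final identification in more detail.
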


\begin{proof}  By (b) there is $i\in A_1$ with $i^2=-1$, and hence $A\cong\CC$ by (e).  
\end{proof}

\begin{lemma} \label{case2}
If $A_0 \cong \CC$, then $A\cong\HH$. 
\end{lemma}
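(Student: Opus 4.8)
The plan is to reconstruct the standard quaternion multiplication table inside $A$ using observations (b), (c), and (d). Since $A_0\cong\CC$ we have $\dim A_0=2$, so by (e) also $\dim A_1=2$ and hence $\dim A=4$. First I would fix $i\in A_0$ with $i^2=-1$, so that $A_0=\RR\oplus\RR i$. Because $A_1\ne 0$, observation (b) supplies some $j\in A_1$ with $j^2=-1$.

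Next I would apply (c) to the pair $u=i\in A_0$, $v=j\in A_1$ and set $k=ij$, which lies in $A_0A_1\subseteq A_1$. Then (c) yields at once $k^2=-1$, $ij=-ji=k$, $jk=-kj=i$, and $ki=-ik=j$; that is, $1,i,j,k$ multiply exactly like the standard quaternion units.

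It then remains to check that $\{1,i,j,k\}$ is a basis of $A$. Since $1,i$ already span $A_0$, it suffices to show $j$ and $k$ are linearly independent, for then they span the $2$-dimensional space $A_1$. If $k=\lambda j$ for some $\lambda\in\RR$, then $(i-\lambda)j=ij-\lambda j=k-\lambda j=0$; as $j\ne 0$ is homogeneous, (d) forces $i-\lambda=0$, i.e. $i\in\RR$, contradicting $i^2=-1$. Hence $\{1,i,j,k\}$ is a basis, the bilinear multiplication of $A$ is determined by the products of basis elements computed above, and these are precisely the products in $\HH$; therefore $A\cong\HH$.

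I do not expect a real obstacle here: once observation (c) is invoked the multiplication table is essentially immediate, and the only point needing a little care is the linear independence of $j$ and $k$, which is exactly what observation (d) on the absence of homogeneous zero divisors is for.
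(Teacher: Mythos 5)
Your proof is correct and follows essentially the same route as the paper: pick $i\in A_0$ and $j\in A_1$ with squares $-1$ via (b), set $k=ij$, invoke (c) for the quaternion relations, and use (e) to see the copy of $\HH$ is all of $A$. The only difference is that you spell out the linear independence of $j$ and $k$ via (d), a detail the paper leaves implicit; that is a welcome addition, not a deviation.
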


\begin{proof} 
 We have $A_0 = \RR \oplus \RR i$ with $i^2=-1$. By (b) we may pick $j\in A_1$ such that $j^2=-1$.
 Setting $k=ij\in A_1$ it follows from (c) that  $A$ contains a copy of $\HH$. However, in view of (e) we actually have $A\cong\HH$.
\end{proof}

Let us now introduce another (an unexpected one for us) example of a super-alternative locally complex algebra. Let  $\TO$ be the $8$-dimensional algebra with basis $\{1,f_1,\ldots,f_7\}$ and multiplication table 
\begin{center}
\begin{small}
\begin{tabular}{|r|r|r|r|r|r|r|r|}
\hline
  & $f_1$ & $f_2$ & $f_3$ & $f_4$ & $f_5$ & $f_6$ & $f_7$\\\hline
  $f_1$ & $-1$ & $f_3$ & $-f_2$ & $f_5$ & $-f_4$ & $f_7$ & $-f_6$\\\hline
   $f_2$ & $-f_3$ & $-1$ & $f_1$ & $f_6$ & $-f_7$ & $-f_4$ & $f_5$\\\hline
     $f_3$ & $f_2$ & $-f_1$ & $-1$ & $f_7$ & $f_6$ & $-f_5$ & $-f_4$\\\hline
       $f_4$ & $-f_5$ & $-f_6$ & $-f_7$ & $-1$ & $f_1$ & $f_2$ & $f_3$\\\hline
         $f_5$ & $f_4$ & $f_7$ & $-f_6$ & $-f_1$ & $-1$ & $f_3$ & $-f_2$\\\hline
           $f_6$ & $-f_7$ & $f_4$ & $f_5$ & $-f_2$ & $-f_3$ & $-1$ & $f_1$\\\hline
             $ f_7$  & $f_6$ & $-f_5$ & $f_4$ & $-f_3$ & $f_2$ & $-f_1$ & $-1$\\\hline
\end{tabular}
\end{small}
\end{center}

\begin{lemma} \label{lto}
 $\TO$ is a super-alternative locally complex algebra with zero divisors and without alter-scalar elements (and hence $\TO\not\cong\OO$).
\end{lemma}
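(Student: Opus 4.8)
The plan is to verify directly from the multiplication table that $\TO$ has each of the asserted properties, isolating the super-alternative condition as the only part requiring real work.

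First I would check that $\TO$ is locally complex. For this it suffices, by Lemma~\ref{Tlc}(v), to observe that $\{1,f_1,\ldots,f_7\}$ is a basis with $f_i^2 = -1$ for all $i$ and $f_i f_j = -f_j f_i$ for all $i\ne j$, both of which are read off at a glance from the antisymmetry visible in the table (the $(i,j)$ and $(j,i)$ entries are negatives of each other, and the diagonal entries are all $-1$). This simultaneously shows $\TO$ is quadratic with $n(a)=\lambda_0^2+\sum_{i=1}^7\lambda_i^2>0$ for $a\ne 0$, so by Lemma~\ref{Tlc}(iv) it is locally complex.

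Next, the presence of zero divisors: I would exhibit two explicit nonzero elements whose product is $0$. Scanning the table for a matching pattern, one looks for indices with, say, $f_af_b = f_c$ and $f_df_e = -f_c$ (plus the cross terms cancelling); for instance one checks that $(f_1+f_6)(f_2+f_5)$ or a similar combination vanishes — I would pick the cleanest such pair from the table and display the two-line computation. For the absence of alter-scalars, I would argue that if $a = \sum \lambda_i f_i$ (we may assume $a\notin\RR$, and by subtracting the scalar part assume $a\in U$, i.e. $a^2\in\RR$) satisfies $x^2 a = x(xa)$ for all $x$, then testing this identity on the basis elements $x = f_i$ forces, via the table, enough linear relations among the $\lambda_i$ to conclude $a=0$; concretely, $f_i(f_i a) = f_i^2 a = -a$ automatically, so the real content is that for a genuine alter-scalar one would need $(f_if_j + f_jf_i)a = f_i(f_ja)+f_j(f_ia)$, i.e. $f_i(f_ja) = -f_j(f_ia)$ for $i\ne j$, and one exhibits a specific triple $i,j$ and a basis element on which this fails for every nonzero $a\in U$. (Since $\OO$ has no zero divisors, the zero-divisor claim already gives $\TO\not\cong\OO$; the alter-scalar remark is recorded for later use in Section~\ref{Secsuper}.)

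The main obstacle is verifying that $\TO$ is super-alternative, i.e. that there is a $\ZZ_2$-grading $\TO = A_0\oplus A_1$ for which \eqref{alte} holds on homogeneous elements. I would take $A_0 = \mathrm{span}\{1,f_1,f_2,f_3\}$ and $A_1 = \mathrm{span}\{f_4,f_5,f_6,f_7\}$, first checking from the table that $A_0$ is a subalgebra isomorphic to $\HH$ and that $A_0A_1\subseteq A_1$, $A_1A_1\subseteq A_0$, so this is indeed a superalgebra grading. Then, since \eqref{alte} is linear in $x$ and the relevant quadratic-in-$u$ expressions are captured by the linearized form \eqref{alte0}, it is enough to verify \eqref{alte0} for $u,v$ ranging over a basis of $A_0$ (and of $A_1$) and $x$ ranging over the full basis $\{1,f_1,\ldots,f_7\}$; the case $u=v$ reduces to \eqref{alte}, which holds automatically whenever $u^2\in\RR$ as it is here. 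This is a finite check, tedious but mechanical, and the cleanest write-up is to note that it amounts to the statement that any homogeneous $u$ together with any $x$ generate an associative subalgebra, which one can organize by the symmetry of the table; I would present the grading and the key structural facts ($A_0\cong\HH$, the inclusions $A_iA_j\subseteq A_{i+j}$, and a sample verification of \eqref{alte0}) and leave the remaining identical cases to the reader.
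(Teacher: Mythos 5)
Your proposal follows essentially the same route as the paper: locally complex via Lemma~\ref{Tlc}\,(v), the grading with even part ${\rm span}\{1,f_1,f_2,f_3\}\cong\HH$ and odd part ${\rm span}\{f_4,\ldots,f_7\}$, a finite mechanical verification of super-alternativity (the paper delegates this to Mathematica), an explicit zero-divisor pair (the paper uses $(f_1-f_4)(f_3-f_6)=0$; your $(f_1+f_6)(f_2+f_5)=0$ also checks out), and elimination of alter-scalars by reducing to the linearized condition $f_i(f_ja)=-f_j(f_ia)$ on basis pairs. The one caveat is that a single pair $(i,j)$ does not fail ``for every nonzero $a\in U$'' as you claim --- it only forces four of the seven coefficients of $a$ to vanish --- so you need several pairs, as in the paper's choice of $(1,4)$, $(2,7)$, and $(3,4)$ to conclude $a\in\RR$.
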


\begin{proof} The fact that $\TO$ is locally complex follows from Lemma \ref{Tlc}\,(v). Let $\TO_0$ be the linear span of $1,f_1,f_2,f_3$, and let $\TO_1$ be the linear span of $f_4,f_5,f_6,f_7$. Then $\TO$ becomes a superalgebra with the even part $\TO_0\cong\HH$. From the way we shall arrive at $\TO$ in the next proof it is not really surprising that  $\TO$ is super-alternative. But we used Mathematica for the actual checking that this is indeed true. Note that $(f_1-f_4)(f_3 - f_6) =0$, so that $\TO$ has zero divisors. Let $a\in \TO$  be such that $x^2 a = x(xa)$ for all $x\in \TO$.
From $(f_i + f_j)^2a= (f_i+f_j)((f_i + f_j)a)$, together with $f_i(f_ia) = f_j(f_ja) = -a$,  it follows  that $f_i(f_ja) + f_j(f_i a) = 0$ whenever $i\ne j$. Writing $a = \lambda_0 +\sum_{k=1}^7 \lambda_k f_k$ we thus have
\begin{equation}\label{fijk}
\sum_{k=1}^7 \lambda_k \Bigl(f_i(f_j f_k) + f_j(f_i f_k)\Bigr) = 0 \quad\mbox{whenever $i\ne j$.}
\end{equation}
Chosing $i=1$ and $j=4$ it follows that $\lambda_2=\lambda_3 =\lambda_6 =\lambda_7=0$. Chosing, for example, $i=2$ and $j=7$ we further get $\lambda_1 =\lambda_4=0$, and chosing 
$i=3$ and $j=4$ finally leads to $\lambda_5=0$. Therefore $a =\lambda_0$ is a scalar.
\end{proof}

\begin{lemma} \label{case3}
If $A_0 \cong \HH$, then $A\cong\OO$ or $A\cong\TO$. 
\end{lemma}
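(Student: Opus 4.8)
The starting point is that $A_0\cong\HH$, so we write $A_0=\mathrm{span}\{1,e_1,e_2,e_3\}$ with the standard quaternion relations $e_i^2=-1$, $e_1e_2=e_3$ (and cyclic), $e_ie_j=-e_je_i$ for $i\ne j$. By (e) we have $\dim A_1=4$, so $\dim A=8$. By (b) we may pick $e_4\in A_1$ with $e_4^2=-1$. The natural move is to set $e_5=e_1e_4$, $e_6=e_2e_4$, $e_7=e_3e_4$, all of which lie in $A_1$, and then to try to deduce as much of the multiplication table of $A$ as possible, hoping to recognize either $\OO$ or $\TO$.

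First I would pin down the products of $e_4,\dots,e_7$ with $e_1,e_2,e_3$. Here I cannot invoke full alternativity, only the super-alternativity axioms \eqref{alte0} and the derived facts (a)--(c). Crucially, $e_4\in A_1$ and $e_i\in A_0$ for $i\le 3$ are homogeneous of \emph{different} parities, so I cannot directly apply (a) to the pair $(e_i,e_4)$; but I \emph{can} apply (c), which gives $e_ie_4=-e_4e_i$, $(e_ie_4)e_4=-e_i$, $(e_ie_4)e_i=e_4$, and $(e_ie_4)^2=-1$, i.e.\ $e_5^2=e_6^2=e_7^2=-1$ and $e_4e_5=-e_5e_4=e_1$, etc. To get the products among $e_1,e_2,e_3$ and $e_5,e_6,e_7$ (e.g.\ $e_1e_5$) and among $e_5,e_6,e_7$ themselves, I would exploit that $e_1,e_2,e_3$ are homogeneous of the \emph{same} parity (all in $A_0$), so (a) applies to anticommuting pairs among them, and separately that $e_5,e_6,e_7\in A_1$ are of the same parity, so (a) applies to anticommuting pairs among \emph{those} once I know they anticommute. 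Concretely: from $e_1e_2+e_2e_1=0$ and (a), $e_1(e_2e_4)=-e_2(e_1e_4)$, i.e.\ $e_1e_6=-e_2e_5$; similar manipulations with the other same-parity anticommuting pairs, combined with (c)-type relations, should determine $e_ie_5$, the products $e_5e_6$ etc., up to a small number of sign ambiguities. The key point I expect is that, unlike in the proof of Zorn's theorem where flexibility and the Moufang identity \eqref{eq3} force $e_5e_6=-e_3$ rigidly, here we only have super-alternativity, so there will genuinely be two inequivalent completions of the table, corresponding to the two sign choices $e_5e_6=\mp e_3$; one yields the octonion table and the other the $\TO$ table.

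The heart of the argument is therefore to organize this sign bookkeeping and show that \emph{exactly} two isomorphism classes arise. I would proceed by: (1) establishing that $\{1,e_1,\dots,e_7\}$ is a basis (linear independence as in the Zorn proof: take squares to exclude a nonzero scalar combination, then multiply by each $e_i$); (2) using \eqref{alte0} systematically — with $u,v$ ranging over same-parity pairs and $x$ over basis elements — to express every product $e_ie_j$ as $\pm e_k$ or $-\delta_{ij}$, reducing the free parameters to a single overall sign (after possibly replacing $e_4$ by $-e_4$, which flips $e_5,e_6,e_7$ and can be used to normalize); (3) checking that the sign $+$ gives a table isomorphic to $\OO$ and the sign $-$ gives a table isomorphic to $\TO$ (comparing with the explicit tables in Section~\ref{Prel} and in Lemma~\ref{lto}). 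At step (3) one must also verify that the two resulting algebras really are non-isomorphic, but this is already handed to us: Lemma~\ref{lto} records that $\TO$ has no alter-scalar elements whereas $\OO$, being alternative, trivially does (every element, indeed $e=0$ works, and more relevantly nonscalar elements satisfy the alternativity identity), so $\TO\not\cong\OO$.

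\textbf{Main obstacle.} The delicate part is step (2): ensuring the super-alternativity relations are strong enough to nail down \emph{all} products — in particular the mixed products like $e_1e_5$ (with $e_1\in A_0$, $e_5\in A_1$, so (a) does not apply to this pair) and the products $e_5e_6$ within $A_1$ — using only (a)--(c) and the graded identities, without accidentally assuming full alternativity. I would handle $e_1e_5=e_1(e_1e_4)$ by noting $e_1\in A_0$ is homogeneous so $e_1^2 x=e_1(e_1x)$ holds for all $x$ by \eqref{alte}; taking $x=e_4$ gives $e_1(e_1e_4)=e_1^2e_4=-e_4$, so $e_1e_5=-e_4$. For $e_2e_5=e_2(e_1e_4)$ I would use (a) with the same-parity anticommuting pair $(e_1,e_2)$ in $A_0$: $e_2(e_1e_4)=-e_1(e_2e_4)=-e_1e_6$, and then show $e_1e_6=-e_7$ (or $e_7$) via another graded-identity manipulation — this is exactly where the binary sign choice enters. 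Once the table is reduced to one sign, recognizing $\OO$ versus $\TO$ is mechanical (and, as Lemma~\ref{lto} already concedes, the authors verified the super-alternativity of $\TO$ by computer, so I would expect the cleanest exposition to note which relations are checked by hand and which by Mathematica), and the non-isomorphism is free from Lemma~\ref{lto}.
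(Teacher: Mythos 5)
Your plan is the same as the paper's: take $A_0=\mathrm{span}\{1,e_1,e_2,e_3\}\cong\HH$, pick $e_4\in A_1$ with $e_4^2=-1$ via (b), form $e_5=e_1e_4$, $e_6=e_2e_4$, $e_7=e_3e_4$, use (c) for the mixed-parity products and (a) for same-parity anticommuting pairs, and locate the whole dichotomy in the single sign $e_1(e_2e_4)=\pm\,e_3e_4$, with $-$ giving $\OO$ and $+$ giving $\TO$, non-isomorphism being supplied by Lemma \ref{lto}. However, the step you flag as the ``main obstacle'' is exactly the step that carries the proof, and you leave it as an unproven assertion (``via another graded-identity manipulation''). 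A priori $e_1(e_2e_4)$ is just some element of the $4$-dimensional space $A_1$; nothing you have written forces it into the line $\RR e_7$, and until that is done you have not reduced the free parameters to one sign, you have a $4$-parameter family. The paper closes this as follows: write $e_1(e_2e_4)=\lambda_1 e_4+\lambda_2 e_5+\lambda_3 e_6+\lambda_4 e_7$; first check (using (a) repeatedly) that $e_4,e_5,e_6,e_7$ pairwise anticommute, e.g.\ $(e_1e_4)(e_2e_4)=-(e_1(e_2e_4))e_4=(e_2(e_1e_4))e_4=-(e_2e_4)(e_1e_4)$; then show by two short computations with (a) and (c) that $e_1(e_2e_4)$ anticommutes with each of $e_4$, $e_5$, $e_6$. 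Anticommuting the expansion with $e_4$, $e_5$, $e_6$ in turn kills $\lambda_1,\lambda_2,\lambda_3$ (since the cross terms vanish by pairwise anticommutativity and each square is $-1$), and then $(e_1(e_2e_4))^2=(e_7)^2=-1$ by (c) forces $\lambda_4^2=1$. That is the missing idea.

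Two smaller cautions. First, your parenthetical that replacing $e_4$ by $-e_4$ ``can be used to normalize'' the sign is misleading at the decisive spot: under $e_4\mapsto -e_4$ both $e_6$ and $e_7$ change sign, so the sign $\epsilon$ in $e_1e_6=\epsilon e_7$ is invariant — which is precisely why two non-isomorphic algebras survive; make sure the normalization is only invoked where it actually acts. Second, once $\lambda_4=\pm1$ is established, the identification of the two resulting tables with $\OO$ and $\TO$ is routine (and for the $+$ case the paper, too, relegates the verification that the table is realized and super-alternative to a computation), so the remainder of your outline is fine.
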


\begin{proof} 
Let $\{1,i,j,k\}$ be a basis of $A_0$ where these elements have the usual meaning. Pick $f\in A_1$ with $f^2 = -1$. Then $f$ anticommutes with $i,j,k$ by (c). It is clear that $\{f,if,jf,kf\}$ is a basis of $A_1$. We claim that all elements in this basis pairwise anticommute. It is easy to see that $f$ anticommutes with each of $if,jf,kf$. 
Using (a) repeatedly we obtain $(if)(jf)=-(i(jf))f=(j(if))f = -(jf)(if)$. Other identities can be checked analogously.

 Since $i(jf)\in A_1$, we have 
\begin{equation}\label{lambde}
i(jf) = \lambda_1 f + \lambda_2 if + \lambda_3 jf +\lambda_4 kf
\end{equation}
 for some $\lambda_i\in \RR$. From (a) we infer that $(i(jf))f = - (if)(jf)$. Similarly,
using (a) and (c) we get
$$
f(i(jf)) = -f((jf)i) = (jf)(fi) = -(jf)(if) = (if)(jf). 
$$ 
The last two identities show that $i(jf)$ anticommutes with $f$. Consequently, anticommuting \eqref{lambde} with $f$ it follows that $\lambda_1=0$. A similar arguing shows that 
$i(jf)$ anticommutes with both $if$ and $jf$, which leads to $\lambda_2 = \lambda_3=0$.  
Note that (c) implies that the squares of both $kf$ and $i(jf)$ are equal $-1$. But then $\lambda_4^2 =1$, i.e., $\lambda_4 = 1$ or $\lambda_4=-1$. If $\lambda_4=1$, i.e., $i(jf) = kf$,
 then we set $f_1=i$, $f_2=j$, $f_3=k$, $f_4=f$, $f_5=if$, $f_6=jf$, and $f_7=kf$. Using the information we have,  it is now just a matter of a routine calculation to verify that 
 $A\cong\TO$.
  Since we know that $\OO$ is a super-alternative locally complex algebra, the other possibility $\lambda_4=-1$ can lead only  to $A\cong \OO$. 
\end{proof}

The 16-dimensional analogue of $\TO$ is the algebra which we denote by $\TS$ and define as follows: if $\{1,f_1,\ldots,f_{15}\}$ is its basis, then the multiplication table is

\begin{center}
\begin{tiny}
\begin{tabular}{|r|r|r|r|r|r|r|r|r|r|r|r|r|r|r|r|r|}
\hline
& $f_1$& $f_2$& $f_3$& $f_4$& $f_5$& $f_6$& $f_7$& $f_8$& $f_9$&$f_{10}$& $f_{11}$& $f_{12}$& $f_{13}$& $f_{14}$& $f_{15}$\\\hline
 $f_1$& $-1$& $f_3$&$-f_2$& $f_5$& $-f_4$& $-f_7$& $f_6$& $f_9$& $-f_8$& $-f_{11}$& $f_{10}$&$-f_{13}$& $f_{12}$& $-f_{15}$& $f_{14}$\\\hline 
  $f_2$& $-f_3$& $-1$& $f_1$& $f_6$&$f_7$& $-f_4$& $-f_5$& $f_{10}$& $f_{11}$& $-f_8$& $-f_9$& $-f_{14}$& $f_{15}$& $f_{12}$& $-f_{13}$\\\hline 
   $f_3$& $f_2$& $-f_1$& $-1$& $f_7$& $-f_6$& $f_5$& $-f_4$&$f_{11}$& $-f_{10}$& $f_9$& $-f_8$& $f_{15}$& $f_{14}$& $-f_{13}$& $-f_{12}$\\\hline
    $f_4$&$-f_5$& $-f_6$& $-f_7$& $-1$& $f_1$& $f_2$& $f_3$& $f_{12}$& $f_{13}$& $f_{14}$& $-f_{15}$& $-f_8$& $-f_9$& $-f_{10}$& $f_{11}$\\\hline
     $f_5$& $f_4$& $-f_7$& $f_6$&$-f_1$& $-1$& $-f_3$& $f_2$& $f_{13}$& $-f_{12}$& $-f_{15}$& $-f_{14}$& $f_9$&$-f_8$& $f_{11}$& $f_{10}$\\\hline
      $f_6$& $f_7$& $f_4$& $-f_5$& $-f_2$& $f_3$& $-1$& $-f_1$& $f_{14}$& $f_{15}$& $-f_{12}$& $f_{13}$& $f_{10}$& $-f_{11}$& $-f_8$& $-f_9$\\\hline
       $f_7$& $-f_6$& $f_5$& $f_4$& $-f_3$& $-f_2$& $f_1$& $-1$& $f_{15}$& $-f_{14}$& $f_{13}$& $f_{12}$& $-f_{11}$& $-f_{10}$& $f_9$& $-f_8$\\\hline
        $f_8$& $-f_9$& $-f_{10}$& $-f_{11}$& $-f_{12}$& $-f_{13}$& $-f_{14}$& $-f_{15}$& $-1$&$f_1$& $f_2$& $f_3$& $f_4$& $f_5$& $f_6$& $f_7$\\\hline
         $f_9$& $f_8$& $-f_{11}$& $f_{10}$& $-f_{13}$& $f_{12}$& $-f_{15}$& $f_{14}$& $-f_1$& $-1$& $-f_3$& $f_2$& $-f_5$& $f_4$& $-f_7$& $f_6$\\\hline
          $f_{10}$& $f_{11}$& $f_8$& $-f_9$& $-f_{14}$& $f_{15}$& $f_{12}$& $-f_{13}$& $-f_2$& $f_3$& $-1$& $-f_1$& $-f_6$& $f_7$& $f_4$& $-f_5$\\\hline
           $f_{11}$& $-f_{10}$& $f_9$& $f_8$& $f_{15}$& $f_{14}$& $-f_{13}$& $-f_{12}$& $-f_3$& $-f_2$& $f_1$& $-1$& $f_7$& $f_6$& $-f_5$& $-f_4$\\\hline
            $f_{12}$& $f_{13}$& $f_{14}$& $-f_{15}$& $f_8$& $-f_9$& $-f_{10}$& $f_{11}$& $-f_4$& $f_5$& $f_6$& $-f_7$& $-1$& $-f_1$& $-f_2$& $f_3$\\\hline
             $f_{13}$& $-f_{12}$& $-f_{15}$& $-f_{14}$& $f_9$& $f_8$& $f_{11}$& $f_{10}$& $-f_5$& $-f_4$& $-f_7$& $-f_6$& $f_1$& $-1$& $f_3$& $f_2$\\\hline
              $f_{14}$& $f_{15}$& $-f_{12}$& $f_{13}$& $f_{10}$& $-f_{11}$& $f_8$& $-f_9$& $-f_6$& $f_7$& $-f_4$& $f_5$& $f_2$& $-f_3$& $-1$& $-f_1$\\\hline 
                $f_{15}$& $-f_{14}$& $f_{13}$& $f_{12}$& $-f_{11}$& $-f_{10}$& $f_9$& $f_8$& $-f_7$& $-f_6$& $f_5$& $f_4$&$-f_3$& $-f_2$& $f_1$& $-1$\\\hline
\end{tabular}
\end{tiny}
\end{center}
The proof of the next lemma is similar to that of Lemma \ref{lto}. Therefore we omit details.

\begin{lemma} \label{lts}
 $\TS$ is a super-alternative locally complex algebra  without alter-scalar elements (and hence $\TS\not\cong\SSS$).
\end{lemma}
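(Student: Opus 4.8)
The plan is to mirror the structure of the proof of Lemma \ref{lto}, so I first address the three separate claims in the statement: (1) $\TS$ is locally complex, (2) $\TS$ is super-alternative with respect to the natural $\ZZ_2$-grading, and (3) $\TS$ has no alter-scalar elements. For (1), I would invoke Lemma \ref{Tlc}\,(v): the given multiplication table shows $f_i^2=-1$ for all $i$ and $f_if_j=-f_jf_i$ for $i\ne j$ (both readable off the displayed table by antisymmetry of the off-diagonal entries), so $\TS$ is locally complex. For (2), I would let $\TS_0=\mathrm{span}\{1,f_1,\ldots,f_7\}$ and $\TS_1=\mathrm{span}\{f_8,\ldots,f_{15}\}$; the table shows $\TS_0\TS_0\subseteq \TS_0$ (indeed $\TS_0\cong\OO$, by comparison with the octonion table in Section \ref{Prel}), $\TS_0\TS_1,\TS_1\TS_0\subseteq\TS_1$, and $\TS_1\TS_1\subseteq\TS_0$, so this is a $\ZZ_2$-grading, and then one must verify \eqref{alte} for all homogeneous $u$; as in Lemma \ref{lto} this is a finite check that I would carry out with Mathematica (or remark that it follows from the construction in the proof of the forthcoming classification, just as $\TO$ arose there).

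For (3), the argument is the direct $16$-dimensional analogue of the one in Lemma \ref{lto}. Suppose $a\in\TS$ satisfies $x^2a=x(xa)$ for all $x\in\TS$. Applying this with $x=f_i+f_j$, $i\ne j$, and using $f_i(f_ia)=f_j(f_ja)=-a$ (a consequence of $f_i^2=f_j^2=-1$ together with the alternativity identity \eqref{alt}, since $f_i,f_j$ are homogeneous, or more simply since $f_i^2\in\RR$), we obtain
\begin{equation*}
f_i(f_ja)+f_j(f_ia)=0\qquad\text{whenever }i\ne j.
\end{equation*}
Writing $a=\lambda_0+\sum_{k=1}^{15}\lambda_k f_k$, each such relation becomes a linear condition $\sum_{k=1}^{15}\lambda_k\bigl(f_i(f_jf_k)+f_j(f_if_k)\bigr)=0$ on the coordinates $\lambda_1,\ldots,\lambda_{15}$, with all coefficients computable from the table. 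I would then exhibit a small explicit list of pairs $(i,j)$ for which the resulting equations together force $\lambda_1=\cdots=\lambda_{15}=0$, exactly as the choices $(1,4),(2,7),(3,4)$ did in Lemma \ref{lto}. Hence $a=\lambda_0\in\RR$, so $\TS$ has no alter-scalar; in particular $\TS\not\cong\SSS$ since $\SSS$ does contain alter-scalars (the scalar multiples of $e_8$), as recorded in Section \ref{Prel}.

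The only genuine obstacle is bookkeeping: confirming super-alternativity and producing the right finite set of index pairs $(i,j)$ that kill all fifteen coordinates $\lambda_k$ are mechanical but tedious table computations, which is precisely why the paper says ``The proof of the next lemma is similar to that of Lemma \ref{lto}. Therefore we omit details.'' No new idea beyond Lemma \ref{lto} is needed; one simply trusts (or machine-checks) the $16\times16$ table. I would therefore present the proof in the abbreviated form above, carrying out by hand only enough of the coordinate elimination to make the pattern convincing and deferring the super-alternativity verification to a computer-algebra check as was done for $\TO$.
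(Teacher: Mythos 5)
Your proposal follows exactly the route the paper takes: the paper's entire proof of this lemma is the remark that it is similar to Lemma \ref{lto}, and your outline is precisely that argument transplanted to the $16$-dimensional table (Lemma \ref{Tlc}\,(v) for local complexity, a machine check of \eqref{alte} on homogeneous elements, and the linearized relation $f_i(f_ja)+f_j(f_ia)=0$ evaluated at enough pairs $(i,j)$ to force $\lambda_1=\cdots=\lambda_{15}=0$). One small caution: $f_i(f_ia)=-a$ does not follow ``more simply since $f_i^2\in\RR$'' in a nonassociative algebra — it follows from super-alternativity of the homogeneous element $f_i$, or most directly from the hypothesis $x^2a=x(xa)$ applied with $x=f_i$; your primary justification is the correct one, so this is only a slip in a parenthetical.
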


The final lemma is similar to Lemma \ref{case3}, but the proof is somewhat more complicated. One of the problems that we have to face in this proof is that we do not have a complete freedom in the selection of an element playing the role of $f$ from the proof of Lemma \ref{case3}. While $f$ was an arbitrary element in $A_1$ with square $-1$,
now we shall have to find a special one.

\begin{lemma} \label{case4}
If $A_0 \cong \OO$, then $A\cong\SSS$ or $A\cong\TS$. 
\end{lemma}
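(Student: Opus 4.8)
The plan is to follow the pattern of Lemma~\ref{case3}. Since $A_0\cong\OO$, I would fix a basis $\{1,e_1,\dots,e_7\}$ of $A_0$ realizing the standard octonion multiplication table, and by (b) pick $f\in A_1$ with $f^2=-1$. By (c) each $e_i$ anticommutes with $f$ and $(e_if)^2=-1$; since $x\mapsto xf$ is injective on $A_0$ by (d), the elements $g_0:=f$ and $g_i:=e_if$ ($1\le i\le 7$) form a basis of $A_1$. Exactly as in Lemma~\ref{case3}, a repeated use of (a) and (c) shows that the fifteen elements $e_1,\dots,e_7,g_0,\dots,g_7$ pairwise anticommute and have square $-1$. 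Consequently $g_ig_0=-e_i$, $g_0g_i=e_i$, $e_ig_0=g_i$, $e_ig_i=-g_0$, and $g_ig_j=-(e_i(e_jf))f$ for $i\ne j$ (the last by applying (a) to the anticommuting pair $e_jf$, $f$), so the whole multiplication of $A$ is determined once the products $e_i(e_jf)=e_ig_j$, $i\ne j$, are known.

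To pin these down I would use the linear maps $L_i\colon A_1\to A_1$, $L_i(x)=e_ix$, which satisfy $L_i^2=-\mathrm{id}$ (by \eqref{alte} for the homogeneous element $e_i$) and $L_iL_j=-L_jL_i$ for $i\ne j$ (by \eqref{alte0}, since $e_ie_j=-e_je_i$). Thus $A_1$ is an $8$-dimensional module over the real Clifford algebra $C$ on seven anticommuting generators of square $-1$; since $C\cong M_8(\RR)\oplus M_8(\RR)$, the image of $C$ in $\mathrm{End}(A_1)$ must be $M_8(\RR)$, so $A_1$ is one of the two irreducible $C$-modules, and likewise for $A_0=\OO$ with its own left multiplications; the two irreducible modules are distinguished by the sign with which the central element $e_1e_2\cdots e_7$ acts. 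If $A_1\cong A_0$ as $C$-modules, I would fix a module isomorphism, rescale it so that its value $f$ at $1\in A_0$ has $f^2=-1$, and note that then $g_i=e_if$ is its value at $e_i$, whence $e_i(e_jf)=e_ig_j$ is the image of $e_ie_j$, i.e.\ a signed basis vector. If $A_1$ is the \emph{other} irreducible $C$-module, the same construction carried out with the right multiplications of $\OO$ (which realize that module) again makes every $e_i(e_jf)$ a signed basis vector, now with the opposite signs. Either way the multiplication of $A$ is completely determined, so $A$ lies in one of at most two isomorphism classes.

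Finally, both classes occur: $\SSS=\AAA_4$ and $\TS$ are super-alternative locally complex algebras with even part $\cong\OO$ (the latter by Lemma~\ref{lts}) which are not isomorphic, since $\SSS$ has an alter-scalar and $\TS$ does not (Lemma~\ref{lts}); hence they lie in different classes, so the two classes are precisely theirs, and therefore $A\cong\SSS$ or $A\cong\TS$. I expect the hard part to be the bookkeeping hidden in the first and last steps: checking that the fifteen non-scalar basis elements genuinely pairwise anticommute is a longer but routine replay of Lemma~\ref{case3}, and identifying the two algebras produced above with $\SSS$ and $\TS$ seems to need a direct computation with the multiplication tables (conveniently done by computer, as elsewhere in this section). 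Note that the ``special'' $f$ which makes this lemma harder than Lemma~\ref{case3} is exactly the image of $1$ under a Clifford-module isomorphism: for a generic $f\in A_1$ with $f^2=-1$ the products $e_i(e_jf)$ need not be signed basis vectors.
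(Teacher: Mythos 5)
Your proof is correct, but it takes a genuinely different route from the paper's. The paper argues by hand: starting from an arbitrary $0\ne u\in A_1$ it runs a multi-step ``correction'' procedure (three auxiliary claims) to manufacture a special $y\in A_1$ satisfying $(e_1e_2)y=-e_1(e_2y)$, $(e_1e_4)y=-e_1(e_4y)$, $(e_2e_4)y=-e_2(e_4y)$ and $(e_3e_4)y=\pm e_3(e_4y)$, and the residual sign in the last identity is what separates $\SSS$ from $\TS$. Your observation that the left multiplications $L_1,\dots,L_7$ make $A_1$ an $8$-dimensional module over $\mathrm{Cl}_{0,7}\cong M_8(\RR)\oplus M_8(\RR)$, hence one of exactly two irreducible modules, replaces all of that element-chasing: the special $f$ the paper labours to construct is precisely the (essentially unique, by Schur's lemma) image of $1$ under a module isomorphism from $(\OO,L)$ or $(\OO,R)$, normalised so that $f^2=-1$, and your reduction of all remaining products to the $e_i(e_jf)$ via (a) and (c) is complete. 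The two model modules are genuinely distinct, since the volume element $L_1\cdots L_7$ acts as $-1$ on $(\OO,L)$ and as $+1$ on $(\OO,R)$ (and one can check it acts as $-1$ on the odd part of $\TS$ and as $+1$ on that of $\SSS$), so the dichotomy is real. Your closing counting argument --- at most two isomorphism classes, and $\SSS$, $\TS$ are two non-isomorphic instances with even part $\OO$ by Lemma \ref{lts} --- neatly sidesteps the ``lengthy and tedious'' table verification that the paper's proof still has to invoke, so the direct computation you worry about at the end is in fact unnecessary. The trade-off is that your argument imports the classification of real Clifford algebras and their irreducible modules, which sits outside the paper's deliberately elementary and self-contained toolkit; what it buys is brevity and a conceptual explanation of \emph{why} there are exactly two super-alternative locally complex algebras over $\OO$: they correspond to the two irreducible $\mathrm{Cl}_{0,7}$-modules.
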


\begin{proof}
Let $\{1,e_1,\ldots,e_7\}$ be a basis of $A_0$ whose multiplication table is given in Section \ref{Prel}. We begin with three claims needed for future reference.

\smallskip
{\sc Claim 1}: Let $i,j\in\{1,2,\ldots,7\}$, $i\ne j$. If $p\in A_1$, then $q = p + (e_i e_j)(e_i (e_jp))$ satisfies $(e_ie_j)q = - e_i(e_jq)$.
\smallskip
 
 Indeed, by \eqref{alte} we have $(e_ie_j)q = (e_ie_j)p - e_i(e_jp)$, while using (a) and \eqref{alte} we get
 \begin{align*}
 e_i(e_jq) &= e_i(e_jp) + e_i(e_j((e_i e_j)(e_i (e_jp)))) = e_i(e_jp) - e_i((e_ie_j)(e_j (e_i (e_jp))))\\
 &= e_i(e_jp) + (e_ie_j)(e_i(e_j (e_i (e_jp))) = e_i(e_jp) - (e_ie_j)(e_j(e_i (e_i (e_jp)))\\
  &= e_i(e_jp) + (e_ie_j)(e_j(e_j p)) =  e_i(e_jp) - (e_ie_j)p,
  \end{align*}
so that $(e_ie_j)q = - e_i(e_jq)$.

\smallskip
{\sc Claim 2}: Let $i,j,k\in\{1,2,\ldots,7\}$ be such that $e_i,e_j,e_ie_j,e_k$ are linearly independent, and  let  $s\in A_1$ be such that $(e_ie_j)s =- e_i(e_j s)$. Then $t= s + (e_ie_k)(e_i(e_ks))$  also satisfies 
 $(e_ie_j)t =- e_i(e_jt)$. 
 
 (Let us add that (a) implies   $t= s + (e_ke_i)(e_k(e_is))$,  and that 
  $(e_ie_j)z =- e_i(e_j z)$ is equivalent to $(e_je_i)z =- e_j(e_i z)$; the order of indices is thus irrelevant.)
\smallskip

  Indeed, by now already familiar arguing we have
 \begin{align*}
 (e_ie_j)t &= (e_ie_j)s + (e_ie_j)((e_ie_k)(e_i(e_ks)))  = (e_ie_j)s - (e_ie_k)((e_ie_j)(e_i(e_ks)))\\
 &= (e_ie_j)s + (e_ie_k)(e_i((e_ie_j)(e_ks))) = (e_ie_j)s - (e_ie_k)(e_i(e_k((e_ie_j)s)))\\
  &= -\bigl(e_i(e_js) - (e_ie_k)(e_i(e_k(e_i(e_js))))\bigr) = -\bigl( e_i(e_js) + (e_ie_k)(e_k(e_i(e_i(e_js))))\bigr)\\
 &= -\bigl( e_i(e_js) - (e_ie_k)(e_k(e_js))\bigr) =  -\bigl( e_i(e_js) + e_i(e_i ((e_ie_k)(e_k(e_js))))\bigr)\\
&= -\bigl( e_i(e_js) - e_i((e_ie_k) (e_i(e_k(e_js))))\bigr) = -\bigl( e_i(e_js) + e_i((e_ie_k) (e_i(e_j(e_ks))))\bigr)\\
&=-\bigl( e_i(e_js) - e_i((e_ie_k) (e_j(e_i(e_ks))))\bigr) = -\bigl( e_i(e_js) + e_i(e_j((e_ie_k)(e_i(e_ks))))\bigr)\\
&=- e_i(e_jt).
  \end{align*}

\smallskip
{\sc Claim 3}: Let $i,j,k\in\{1,2,\ldots,7\}$, $i\ne j$,  and  let $\e\in \RR$ and  $w\in A_1$ be such that $(e_ie_j)w =\epsilon e_i(e_j w)$. Set $u = e_kw$. If $k\in \{i,j\}$,
then  $(e_ie_j)u =\epsilon e_i(e_j u)$, and if $k\notin \{i,j\}$,
then  $(e_ie_j)u =-\epsilon e_i(e_j u)$.
\smallskip
 
 If $k\in \{i,j\}$, then we may assume $k = j$ without loss of generality. We have 
 $$
 (e_i e_j)(u) = (e_i e_j)(e_ jw)= - e_j ((e_i e_j) w) = -\e e_j (e_i (e_jw)) = \e e_i (e_j u).  
 $$
If $k\notin \{i,j\}$, then we have
\begin{align*}
 &(e_i e_j)(u) = (e_i e_j)(e_ kw)= - e_k ((e_i e_j) w) \\
 =& -\e e_k (e_i (e_jw)) =\e e_i(e_k(e_j w)) =- \e e_i (e_j u).  
\end{align*}

 After establishing these auxiliary claims, we now begin the actual proof by picking a nonzero $u\in A_1$. As mentioned above, an arbitrary chosen $u$ may not be the right choice, so we have to "remedy"  it. Let $v' = u + (e_1e_2)(e_1(e_2u))\in A_1$. By Claim 1, $v'$ satisfies  $(e_1e_2)v'=-e_1(e_2v')$.  If $v'=0$, then we have $(e_1e_2)u=e_1(e_2u)$. But then $v'' = e_3u$ satisfies
 $(e_1e_2)v''=-e_1(e_2v'')$ by Claim 3. 
 Thus, in any case there is a nonzero $v\in  A_1$ such that 
 $$(e_1e_2)v=- e_1(e_2v).$$
  Now consider
$w' = v + (e_1 e_4)(e_1(e_4v))$. By Claim 1 we have $(e_1e_4)w' = - e_1(e_4w')$, and
 by Claim 2 we have $(e_1e_2)w'=- e_1(e_2w')$. If $w'=0$, then $(e_1e_4)v= e_1(e_4v)$.
 But then $w'' = e_2v$ satisfies $(e_1e_2)w''=- e_1(e_2w'')$
 and $(e_1e_4)w'' = - e_1(e_4w'')$.
    Thus, there exists a nonzero $w\in A_1$ satisfying 
    $$(e_1e_2)w=-e_1(e_2w),\,\, (e_1e_4)w=- e_1(e_4w).$$
      We now repeat the same procedure with respect to $e_2$ and $e_4$. That is, we introduce $x'= w + (e_2e_4)(e_2(e_4w))$, and apply Claims 1 and 2 to conclude that
      $(e_1e_2)x'=-e_1(e_2x')$, $(e_1e_4)x'=- e_1(e_4x')$, and $(e_2e_4)x'=- e_2(e_4x')$. If $x'=0$, then $(e_2e_4)w = e_2(e_4w)$, and therefore Claim 3 tells us that
     $(e_1e_2)x''=-e_1(e_2x'')$, $(e_1e_4)x''=- e_1(e_4x'')$, and $(e_2e_4)x''=- e_2(e_4x'')$, where $x'' = e_1 w$.  In any case we have found a
       a nonzero $x\in A_1$ satisfying 
       $$(e_1e_2)x=- e_1(e_2x),\,\, (e_1e_4)x=- e_1(e_4x),\,\,(e_2e_4)x=- e_2(e_4x).$$ 
       Considering $y' = x + (e_3e_4)(e_3(e_4x))$ we see from Claim 2 that $(e_1e_4)y'=- e_1(e_4y')$ and $(e_2e_4)y'=- e_2(e_4y')$, while apparently we cannot conclude that also $(e_1e_2)y'=- e_1(e_2y')$. However, multiplying $(e_1e_2)x=- e_1(e_2x)$ from the left by $e_1$ we get
 $e_1( (e_1e_2)x)= e_2x$, which can be written as $e_1(e_3x) = - (e_1 e_3)x$. Therefore Claim 2 yields $e_1(e_3y') = - (e_1 e_3)y'$. Multiplying this from the left by $e_1$ we arrive at the desired identity $(e_1e_2)y'=- e_1(e_2y')$. Also,  $(e_3e_4)y'=- e_3(e_4y')$ holds by Claim 1. We still have to deal with the case where $y'=0$, i.e.,  $(e_3e_4)x= e_3(e_4x)$. The usual reasoning now does not work, since we do not have "enough room" to apply Claim 3. Thus, the final conclusion is that there exists
 a nonzero $y\in A_1$ such that
 $$
 (e_1e_2)y=-e_1(e_2y),\,\,(e_1e_4)y=- e_1(e_4y),\,\,(e_2e_4)y=- e_2(e_4y), \,\,(e_3e_4)y=\pm e_3(e_4y).
 $$
 In view of (b) we may assume without loss of generality that $y^2=-1$. Let us first consider the case where $(e_3e_4)y=e_3(e_4y)$. We set $f_8 = y$ and
 $f_i = e_i$, $f_{i+8}= f_i f_8$, $i=1,\ldots,7$. By standard calculations one can now verify that $A\cong \TS$; checking all details is  lengthy and tedious, but straigtforward. The other possibility where  $(e_3e_4)y=-e_3(e_4y)$ of course leads to  $A\cong \SSS$.
\end{proof}

All lemmas together yield our main result.

\begin{theorem}\label{MT}
 A  super-alternative  locally complex algebra is isomorphic to $\RR$, $\CC$, $\HH$, $\OO$, $\TO$, $\SSS$, or $\TS$.
\end{theorem}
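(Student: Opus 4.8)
The plan is to assemble the lemmas already in place, so the argument for Theorem \ref{MT} itself is short. First I would recall the standing hypotheses: $A=A_0\oplus A_1$ is a $\ZZ_2$-graded algebra in which \eqref{alte} holds for all homogeneous elements, and $A$ is locally complex. Since every element of $A_0$ is homogeneous, $A_0$ satisfies the alternativity identities, so $A_0$ is an alternative algebra; being a unital subalgebra of a locally complex algebra it is itself locally complex. Hence Theorem \ref{TFZ} applies and gives $A_0\cong\RR$, $\CC$, $\HH$, or $\OO$.

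Next I would split according to whether $A_1=0$. If $A_1=0$, then $A=A_0$ is one of $\RR,\CC,\HH,\OO$, all of which occur in the asserted list, and we are done. If $A_1\neq 0$, then observation (e) gives $\dim A_1=\dim A_0$, so $A$ is finite dimensional with $\dim A\in\{2,4,8,16\}$, and I would invoke the case lemma matching $A_0$: Lemma \ref{case1} gives $A\cong\CC$ when $A_0\cong\RR$; Lemma \ref{case2} gives $A\cong\HH$ when $A_0\cong\CC$; Lemma \ref{case3} gives $A\cong\OO$ or $A\cong\TO$ when $A_0\cong\HH$; and Lemma \ref{case4} gives $A\cong\SSS$ or $A\cong\TS$ when $A_0\cong\OO$. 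In every case $A$ is isomorphic to one of $\RR,\CC,\HH,\OO,\TO,\SSS,\TS$, which is exactly the statement. I would also remark that the super-alternative grading on a given $A$ need not be unique, but the conclusion concerns $A$ as an (ungraded) algebra, so it is irrelevant which grading exhibits the super-alternative property.

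For this final statement there is essentially no obstacle — it is pure bookkeeping once the lemmas are available. The real work lives upstream: in Lemma \ref{case4}, where one must manufacture a \emph{special} element $f\in A_1$ with $f^2=-1$ satisfying enough relations of the form $(e_ie_j)f=-e_i(e_jf)$ (the iterative "remedy" built out of Claims 1--3) before the multiplication of $A$ is forced to agree with that of $\SSS$ or of $\TS$; and in Lemmas \ref{lto} and \ref{lts}, where one checks, partly by machine computation, that $\TO$ and $\TS$ really are super-alternative and locally complex and that they are not isomorphic to $\OO$ and $\SSS$ respectively. Accordingly, in writing up Theorem \ref{MT} I would keep the proof to a few lines that merely cite Theorem \ref{TFZ}, observation (e), and Lemmas \ref{case1}--\ref{case4}.
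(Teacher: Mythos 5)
Your proposal is correct and follows exactly the paper's own route: the authors likewise observe that $A_0$ is an alternative locally complex algebra, invoke Theorem \ref{TFZ} to pin it down as $\RR$, $\CC$, $\HH$, or $\OO$, dispose of the case $A_1=0$, and then conclude by the four case lemmas (with observation (e) controlling the dimension), so the theorem itself is the one-line assembly you describe. No gap.
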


\begin{remark}\label{rema1}
In the course of the proof we did not use the assumption that \eqref{alte} holds for all $u,x\in A_1$. Therefore we can replace the  super-alternativity assumption by a slightly milder one.
\end{remark}

This list reduces to Cayley-Dickson algebras under the additional assumption that there exist alter-scalar elements.

\begin{corollary}\label{MTc1}
 A  super-alternative  locally complex algebra containing alter-scalar elements is isomorphic to $\RR$, $\CC$, $\HH$, $\OO$, or $\SSS$.
\end{corollary}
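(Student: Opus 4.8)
The plan is to read this off immediately from Theorem \ref{MT} together with the two lemmas that introduced the exotic algebras $\TO$ and $\TS$. By Theorem \ref{MT}, any super-alternative locally complex algebra $A$ is isomorphic to one of $\RR$, $\CC$, $\HH$, $\OO$, $\TO$, $\SSS$, or $\TS$, so all that remains is to discard $\TO$ and $\TS$ once we assume in addition that $A$ contains an alter-scalar element.

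First I would invoke Lemma \ref{lto}, which asserts precisely that $\TO$ contains no alter-scalar element; since the property of being an alter-scalar is preserved under isomorphism, this forces $A\not\cong\TO$. Then I would use Lemma \ref{lts} in exactly the same way to get $A\not\cong\TS$. Hence $A$ is isomorphic to one of $\RR$, $\CC$, $\HH$, $\OO$, or $\SSS$, as claimed.

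It is worth remarking (though this is not needed for the statement, which only asserts an isomorphism with one of five algebras) that the surviving five are not spurious: $\CC$ and $\HH$ are associative and $\OO$ is alternative, so in each of them $x^2 a = x(xa)$ holds identically and every nonscalar element is an alter-scalar; and in $\SSS$ the nonzero scalar multiples of $e_8$ are alter-scalars, as recalled in Section \ref{Prel}. Only $\RR$ trivially fails to contain one, for lack of nonscalar elements, so the list cannot be shortened except in that vacuous case.

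The entire difficulty has already been packed into Lemmas \ref{lto} and \ref{lts}: proving that neither $\TO$ nor $\TS$ possesses an $a\notin\RR$ with $x^2 a = x(xa)$ for every $x$. In Lemma \ref{lto} this was carried out by substituting $x = f_i + f_j$ to obtain the linear system \eqref{fijk} and then checking that suitable choices of the pair $(i,j)$ force all coefficients $\lambda_k$ of $a$ to vanish; the argument for $\TS$ has the same shape but is run on the $16$-dimensional multiplication table, which is why its details were omitted. Granting those two lemmas, the corollary itself requires no further argument, and I expect no genuine obstacle at this final stage.
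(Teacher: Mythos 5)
Your proposal is correct and is exactly the argument the paper intends: the corollary is stated immediately after Theorem \ref{MT}, and its proof is precisely the combination of that theorem with Lemmas \ref{lto} and \ref{lts}, which exclude $\TO$ and $\TS$ for lack of alter-scalar elements. Your side remark about which of the five surviving algebras actually contain alter-scalars is accurate but, as you note, not needed for the statement.
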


\begin{corollary}\label{MTc2}
 A  super-alternative  locally complex algebra which contains  alter-scalar elements, but is not alternative, is isomorphic  to $\SSS$.
\end{corollary}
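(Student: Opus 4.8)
The plan is to deduce this immediately from Corollary \ref{MTc1} together with the elementary observation that four of the five algebras appearing there are alternative. First I would invoke Corollary \ref{MTc1}: since $A$ is a super-alternative locally complex algebra containing alter-scalar elements, it follows that $A$ is isomorphic to one of $\RR$, $\CC$, $\HH$, $\OO$, or $\SSS$. Next I would recall that $\RR$, $\CC$, $\HH$ are associative and $\OO$ is alternative (as discussed in Section \ref{Prel} and reproved via Zorn's theorem in Section 3), so each of these four algebras is alternative. By hypothesis $A$ is not alternative, hence $A$ cannot be isomorphic to any of the first four, and the only surviving possibility is $A\cong\SSS$.

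To see that the statement is not vacuous I would also record the converse direction: $\SSS$ itself meets all three hypotheses. It is a super-alternative locally complex algebra --- this was noted at the start of Section \ref{Secsuper}, using the natural $\ZZ_2$-grading of $\SSS=\AAA_4$ with even part $\AAA_3\times 0$ and odd part $0\times\AAA_3$ --- it contains alter-scalar elements, namely the nonzero scalar multiples of $e_8$ (Section \ref{Prel}), and the multiplication table shows it is not alternative (it even has zero divisors). Thus $\SSS$ is a genuine instance of the class in question.

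There is essentially no obstacle here: the corollary is a one-line consequence of Corollary \ref{MTc1} once one checks which members of the list $\RR,\CC,\HH,\OO,\SSS$ fail to be alternative. The only mild point to verify is that the hypothesis ``not alternative'' simultaneously excludes all four of the low-dimensional algebras and leaves only $\SSS$; since $\RR$, $\CC$, $\HH$ are associative and hence satisfy \eqref{alt}, and $\OO$ is alternative, this verification is immediate, and the proof can be stated in a couple of sentences.
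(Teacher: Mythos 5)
Your proof is correct and follows exactly the route the paper intends: Corollary \ref{MTc2} is an immediate consequence of Corollary \ref{MTc1} once one observes that $\RR$, $\CC$, $\HH$ are associative and $\OO$ is alternative, so only $\SSS$ survives the hypothesis of non-alternativity. The additional verification that $\SSS$ itself satisfies all the hypotheses is a nice touch but not logically required for the corollary as stated.
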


Let $A$ be an algebra, and let $x\in A$. The {\em annihilator} of $x$ is the space  Ann$(x) = \{y\in A\,|\, xy =0\}$.  If 
$A =\AAA_n$ is a Cayley-Dickson algebra, then the dimension of Ann$(x)$ is a multiple of $4$ \cite{Biss, Moreno}. Moreover, if $A = \AAA_4 =\SSS$, then the dimension of Ann$(x)$ is exactly $4$ for every zero divisor $x$ in $A$ \cite[Section 12]{Biss}. The algebras $\TO$ and $\TS$ do not have this property. It is easy to check that $x =f_1-f_4\in \TO$ has the $2$-dimensional annihilator spanned by $f_2 + f_7$ and
$f_3 - f_6$. Further, the dimension of the annihilator of $x = f_3 + f_{12}\in\TS$ is $6$; it is spanned by $f_1 + f_{14}$, $f_2  - f_{13}$, $f_{4} + f_{11}$, $f_{5} + f_{10}$, $f_6-f_9$, and
$f_7-f_8$. Thus, we have

\begin{corollary}\label{MTc3}
 Let $A$ be a  super-alternative  locally complex algebra which is not a division algebra. If the dimension of {\rm Ann}$(x)$ is $4$ for every zero divisor in $A$, then $A\cong \SSS$.
\end{corollary}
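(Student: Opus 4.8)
The plan is to reduce the statement to the classification already obtained in Theorem \ref{MT} and then to eliminate all but one of the listed algebras using the annihilator hypothesis. By Theorem \ref{MT}, $A$ is isomorphic to one of $\RR$, $\CC$, $\HH$, $\OO$, $\TO$, $\SSS$, or $\TS$. Since $\RR$, $\CC$, $\HH$, and $\OO$ are division algebras (they are the first four Cayley-Dickson algebras, and $\OO$ is a division algebra as recalled in Section \ref{Prel}), the assumption that $A$ is not a division algebra leaves only three candidates: $\TO$, $\SSS$, and $\TS$.

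Next I would rule out $\TO$ and $\TS$. For $\TO$, the element $x = f_1 - f_4$ is a zero divisor because $(f_1-f_4)(f_3-f_6)=0$ (Lemma \ref{lto}); a short inspection of the multiplication table for $\TO$ shows that ${\rm Ann}(x)$ is spanned by $f_2+f_7$ and $f_3-f_6$, so $\dim {\rm Ann}(x) = 2 \ne 4$. Hence $\TO$ fails the hypothesis. Similarly, in $\TS$ the element $x = f_3 + f_{12}$ is a zero divisor and, reading off the multiplication table, ${\rm Ann}(x)$ is spanned by $f_1+f_{14}$, $f_2-f_{13}$, $f_4+f_{11}$, $f_5+f_{10}$, $f_6-f_9$, $f_7-f_8$, so $\dim {\rm Ann}(x)=6 \ne 4$; thus $\TS$ also fails the hypothesis. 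Consequently the only possibility consistent with the hypothesis is $A\cong \SSS$.

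The only computational content is the determination of the two annihilators above, and this is a routine (if slightly tedious) linear algebra computation with the explicit multiplication tables of $\TO$ and $\TS$; in fact these computations have already been carried out in the discussion preceding the corollary, so nothing new is required. I do not expect any genuine obstacle: once Theorem \ref{MT} is in hand, the argument is purely a matter of exhibiting a single ``bad'' zero divisor in each of the two exotic algebras. For completeness one may also recall, to see that the corollary is not vacuous, that $\SSS$ itself does satisfy the hypothesis, the dimension of ${\rm Ann}(x)$ being $4$ for every zero divisor $x$ in $\SSS$ by \cite[Section 12]{Biss}; but this fact is not needed for the implication.
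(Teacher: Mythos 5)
Your proof is correct and follows exactly the paper's argument: reduce to the list in Theorem \ref{MT}, discard the division algebras, and eliminate $\TO$ and $\TS$ by exhibiting the very same zero divisors $f_1-f_4$ and $f_3+f_{12}$ with annihilators of dimension $2$ and $6$ respectively. Nothing to add.
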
 

One can check that 
$$
1\mapsto 1,\,\,
e_1\mapsto f_1,\,\,
e_2\mapsto f_2,\,\,
e_3\mapsto f_3,\,\,
e_4\mapsto f_{12},\,\,
e_5\mapsto -f_{13},\,\,
e_6\mapsto-f_{14},\,\,
e_7\mapsto -f_{15}
$$
defines an embedding of $\TO$ into $\SSS$. Thus, both $\OO$ and $\TO$ can be viewed as subalgebras of $\SSS$.
Chan and \DJ okovi\' c proved that $\SSS$ has $6$-dimensional subalgebras, which, however, are not contained in $8$-dimensional subalgebras of $\SSS$ \cite[Corollary 3.6, Theorem 8.1]{ChanDj}.
Accordingly, $\OO$ and $\TO$ do not have $6$-dimensional subalgebras. Further, $\SSS$
does not contain $5$-dimensional subalgebras \cite[Proposition 4.4]{ChanDj}.  This does not hold for $\TS$. For example, the linear span of $1$, $f_{1}+f_{14}$, $f_{3}-f_{12}$,  $f_{6}-f_{9}$, and $f_{7}-f_{8}$ is a $5$-dimensional subalgebra of $\TS$. Combining all these we get our final corollary.

\begin{corollary}\label{MTc4}
 Let $A$ be a  super-alternative  locally complex algebra. If $A$ has $6$-dimensional subalgebras, but does not have $5$-dimensional  subalgebras,  then $A\cong \SSS$.
\end{corollary}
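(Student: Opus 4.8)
The plan is to read off the statement directly from the classification Theorem \ref{MT} and then eliminate, one at a time, every algebra on its list except $\SSS$. Since $A$ is a super-alternative locally complex algebra, Theorem \ref{MT} gives $A\cong\RR$, $\CC$, $\HH$, $\OO$, $\TO$, $\SSS$, or $\TS$. The algebras $\RR$, $\CC$, $\HH$ are ruled out at once, since they have dimension $1$, $2$, $4$ respectively and therefore cannot contain a $6$-dimensional subalgebra; this uses the first hypothesis. It remains to separate $\SSS$ from $\OO$, $\TO$, and $\TS$.

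Next I would invoke the first hypothesis again to discard $\OO$ and $\TO$. As recorded in the discussion preceding the corollary, neither $\OO$ nor $\TO$ has a $6$-dimensional subalgebra: for $\OO$ this is classical, and for $\TO$ it follows from the explicit embedding $\TO\hookrightarrow\SSS$ exhibited above together with the Chan--\DJ okovi\'c result \cite[Theorem 8.1]{ChanDj} that the $6$-dimensional subalgebras of $\SSS$ lie in no $8$-dimensional subalgebra (the image of $\TO$ being precisely such an $8$-dimensional subalgebra). Hence $A\not\cong\OO$ and $A\not\cong\TO$. Then the second hypothesis rules out $\TS$: the linear span of $1$, $f_1+f_{14}$, $f_3-f_{12}$, $f_6-f_9$, $f_7-f_8$ is, as noted above, a $5$-dimensional subalgebra of $\TS$ (closure under multiplication is a short check against the table of $\TS$), so $A\not\cong\TS$. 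Combining the three eliminations leaves $A\cong\SSS$ as the only possibility. For non-vacuousness one remarks that $\SSS$ does meet both hypotheses, by \cite[Corollary 3.6]{ChanDj} (existence of $6$-dimensional subalgebras) and \cite[Proposition 4.4]{ChanDj} (absence of $5$-dimensional ones).

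The main obstacle is essentially nil: once Theorem \ref{MT} and the cited subalgebra facts about $\OO$, $\TO$, $\SSS$, $\TS$ are in hand, the argument is pure bookkeeping. The single point needing a genuine (if tiny) argument rather than a bare citation is the elimination of $\TO$, where one must transport a hypothetical $6$-dimensional subalgebra of $\TO$ across the embedding into $\SSS$ and then contradict \cite[Theorem 8.1]{ChanDj}; everything else is dimension counting and the explicit $5$-dimensional example inside $\TS$.
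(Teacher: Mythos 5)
Your proposal is correct and follows exactly the paper's (implicit) argument: apply Theorem \ref{MT}, discard $\RR,\CC,\HH$ by dimension, discard $\OO$ and $\TO$ because as $8$-dimensional subalgebras of $\SSS$ they cannot contain a $6$-dimensional subalgebra by the Chan--\DJ okovi\'c results, and discard $\TS$ via its explicit $5$-dimensional subalgebra. No discrepancies worth noting.
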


{\bf Acknowledgement}. 
The authors are  grateful to the referee for  careful reading of the
paper and the resulting useful remarks.

\end{document}